\DeclareMathOperator{\GL}{GL}
\DeclareMathOperator{\GSp}{GSp}
\DeclareMathOperator{\aut}{Aut}
\DeclareMathOperator{\re}{Re}
\DeclareMathOperator{\crk}{corank}
\DeclareMathOperator{\res}{Res}
\DeclareMathOperator{\coker}{cok}
\DeclareMathOperator{\rank}{rank}
\newcommand{\Prob}{\mathrm{Prob}}
\DeclareMathOperator{\HNF}{HNF}
\let\subset\subseteq
\newcommand{\Q}{\mathbb{Q}}
\newcommand{\Z}{\mathbb{Z}}
\newcommand{\N}{{\mathbb {N}}}
\newcommand{\oo}{{\mathcal {O}}}
\newcommand{\HH}{{\mathcal {H}}}
\newcommand{\qbinom}[2]{{#1 \brack #2}_q}
\newcommand{\pbinomreg}[2]{{#1 \brack #2}_p}
\newcommand{\pbinom}[2]{{#1 \brack #2}_{p^{-1}}}
\newcommand{\qfac}[1]{\left[#1\right]_q!}
\newcommand{\nn}{\nonumber}
\newtheorem{lemma}{Lemma}[section]
\newtheorem{prop}[lemma]{Proposition}
\newtheorem{theorem}[lemma]{Theorem}
\newtheorem{corollary}[lemma]{Corollary}
\theoremstyle{definition}
\newtheorem{definition}[lemma]{Definition}
\theoremstyle{remark}
\newtheorem*{proof*}{Proof}
\numberwithin{equation}{section}
\newenvironment{proofof}[1]{\noindent{\emph{Proof of {#1}.}}}{\qed\vspace{3ex}}
\title{The cotype zeta function of $\Z^d$}
\author{Gautam Chinta}
\address{Department of Mathematics, The City College of New York, New York, NY 10031}
\email{gchinta@ccny.cuny.edu}
\author{Nathan Kaplan}
\address{Department of Mathematics, University of California, Irvine, CA 92697-3875} 
\email{nckaplan@math.uci.edu}
\author{Shaked Koplewitz}
\address{Department of Mathematics, Yale University, New Haven, CT 06511} 
\email{shaked.koplewitz@yale.edu}
\begin{document}

\begin{abstract}
We give an asymptotic formula for the number of sublattices $\Lambda
\subseteq \Z^d$ of index at most $X$ for which $\Z^d/\Lambda$ has rank
at most $m$, answering a question of Nguyen and Shparlinski.  We
compare this result to work of Stanley and Wang on Smith normal
forms of random integral matrices and discuss connections to the
Cohen-Lenstra heuristics.  Our arguments are based on Petrogradsky's
formulas for the cotype zeta function of $\Z^d$, a multivariable
generalization of the subgroup growth zeta function of $\Z^d$.
\end{abstract}

\maketitle

%%%%%%%%%%%%%%%%%%%%%%%%%%%%%%%%%
%%%%%%%%%%%%%%%%%%%%%%%%%%%%%%%%%
\section{Introduction}
%%%%%%%%%%%%%%%%%%%%%%%%%%%%%%%%%
%%%%%%%%%%%%%%%%%%%%%%%%%%%%%%%%%

A fundamental problem in the field of subgroup growth is understanding
the number of subgroups of finite index $n$ in a fixed group $G.$  In
many cases, analytic properties of the {\em subgroup growth zeta
  function} $\zeta_G(s)$ provide useful information.  This is the
Dirichlet series
\begin{equation}
  \label{def:sgzf}
  \zeta_G(s)=\sum_{H\subset G} \frac{1}{[G:H]^s}
\end{equation}
where $H$ ranges over all finite index subgroups of $G$.  If the
number of subgroups in $G$ of index $n$ grows at most polynomially,
then the Dirichlet series defining $\zeta_G(s)$ converges absolutely
for $\re(s)$ sufficiently large.  An analytic continuation of the
series and knowledge of the locations and orders of its poles would provide information
on asymptotics for the number of subgroups of index less than $X$ as
$X\to\infty.$

One of the most basic examples 
is the subgroup growth zeta function of the integer lattice $\Z^d$
which turns out to have a simple expression as a product of Riemann
zeta functions:
\begin{equation}
  \label{eq:sgzf_Zd}
  \zeta_{\Z^d}(s)=\zeta(s)\zeta(s-1)\cdots \zeta(s-(d-1)).
\end{equation}
See the book of Lubotzky and Segal for five proofs
of this fact \cite{lubotzky_segal}. 
Since $\zeta(s)$ has a simple pole at $s=1$, standard Tauberian
techniques immediately give the asymptotic
\begin{equation}
  \label{eq:Zd_asymptotic}
  N_d(X):=\#\{\mbox{sublattices of } \Z^d
\mbox{ of index} <X\} 
= \frac{\zeta(d)\zeta(d-1)\cdots \zeta(2)}{d}X^d+O(X^{d-1}\log(X))
\end{equation}
as $X\to\infty$.  

\subsection{The proportion of lattices with given corank} A number of more refined questions about the distribution of
sublattices of $\Z^d$ can be asked.  Motivated by work of
Nguyen and Shparlinski \cite{ns}, we investigate the distribution of
sublattices of $\Z^d$ whose {\em cotype} has a certain form.  The
cotype of a sublattice $\Lambda\subset \Z^d$ is defined as follows.
By elementary divisor theory, there is a unique $d$-tuple of positive integers
$(\alpha_1,\ldots,\alpha_d)=(\alpha_1(\Lambda), \ldots,
\alpha_d(\Lambda))$ such that the finite abelian group $\Z^d/\Lambda$
is isomorphic to the sum of cyclic groups
\begin{equation}
  \label{eq:invt_factors}
  (\Z/\alpha_1\Z)\oplus
  (\Z/\alpha_2\Z)\oplus\cdots\oplus (\Z/\alpha_d\Z)
\end{equation}
where $\alpha_{i+1} \mid \alpha_i$ for $1\leq i\leq d-1.$ We call the $d$-tuple
$\alpha(\Lambda):=(\alpha_1(\Lambda), \ldots, \alpha_d(\Lambda))$ the
{\em cotype} of $\Lambda.$ The largest index $i$ for which
$\alpha_i(\Lambda)\neq 1$ is called the {\em rank} of $\Z^d/\Lambda$ and the {\em corank} of $\Lambda.$ By
convention, $\Z^d$ has corank 0.  A sublattice $\Lambda$ of corank 0
or 1 is called {\em cocyclic}, i.e.,  $\Z^d/\Lambda$ is cyclic, or
equivalently, $\Lambda$ has cotype $([\Z^d:\Lambda], 1, \ldots,
1)$.

Nguyen and Shparlinski study the distribution of
cocyclic sublattices of $\Z^d$ and pose several related
questions. Let $N_{d}^{(m)}(X)$ be the number of sublattices $\Lambda$
of $\Z^d$ of index less than $X$ such that $\Lambda$ has corank at most $m$.  In particular, $N_{d}^{(1)}(X)$ is
the number of cocyclic sublattices of $\Z^d$ of index less than $X$.  Throughout this paper we use $\prod_p$ to denote a product over all primes. Rediscovering a result of Petrogradsky \cite{petro} by more elementary
means, they show that
\begin{equation}
  \label{eq:ns_cocyclic}
 N_{d}^{(1)}(X)\sim \frac{\theta_d}{d}X^d, \mbox{ where } 
\theta_d=\prod_p\left( 1+\frac{p^{d-1}-1}{p^{d+1}-p^d}\right)
\end{equation}
as $X\to\infty$.  Comparing this to the asymptotic
(\ref{eq:Zd_asymptotic}) for all sublattices, Nguyen-Shparlinski and
Petrogradsky both observe that the probability that a ``random''
sublattice of $\Z^d$ is cocyclic is about $85\%$ for $d$ large.

Nguyen and Shparlinski conclude their paper by stating that it would
be of interest to obtain similar asymptotic formulas for
$N_d^{(m)}(X)$ for $m>1$ and to show that the sublattices of corank
$m$ form a negligible proportion of all sublattices of $\Z^d$ when $m$
is sufficiently large.

In this paper we show the following theorem.
\begin{theorem}\label{thm:ndmx}
  Let $1\leq m\leq d.$ As $X\to\infty$,
\begin{equation}
  \label{eq:ndmx}
  N_d^{(m)}(X)\sim\frac {X^d}{d}\cdot \prod_p \left((1-p^{-1})\sum_{i=0}^m 
\pbinom{d}{i}\frac{p^{-i^2}}{\prod_{j=1}^{i}{(1-p^{-j})}}\right).
\end{equation}
\end{theorem}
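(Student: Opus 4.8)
The plan is to encode the counting of sublattices by corank into a multivariable Dirichlet series---the cotype zeta function of $\Z^d$---and then extract the asymptotic via a Tauberian theorem. Concretely, for a sublattice $\Lambda\subset\Z^d$ write its cotype as $(\alpha_1,\dots,\alpha_d)$ with $\alpha_{i+1}\mid\alpha_i$, so that the index is $[\Z^d:\Lambda]=\prod_i\alpha_i$ and the corank is the largest $i$ with $\alpha_i>1$. Petrogradsky's formula gives a product over primes $p$ expressing $\sum_\Lambda \prod_i \alpha_i(\Lambda)^{-s_i}$ (or an equivalent normalization) as an Euler product whose local factors are explicit rational functions in $p^{-s_1},\dots,p^{-s_d}$; summing a geometric-type series in the ``tail'' variables $s_2,\dots,s_d$ over all allowed cotypes of corank at most $m$ produces, at each prime, exactly the factor $(1-p^{-1})\sum_{i=0}^m \pbinom{d}{i} p^{-i^2}/\prod_{j=1}^i(1-p^{-j})$ appearing in the statement, times the pole-carrying factor associated to the index variable $s$. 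So the first step is to specialize Petrogradsky's multivariable identity to the Dirichlet series $Z_d^{(m)}(s):=\sum_{\crk(\Lambda)\le m}[\Z^d:\Lambda]^{-s}$ and record the resulting Euler product.

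The second step is analytic: one shows that $Z_d^{(m)}(s)$ has a meromorphic continuation past $\re(s)=d$ with a single simple pole at $s=d$, and computes the residue. The natural way is to factor out $\zeta(s-(d-1))$ (the Riemann zeta factor responsible for the pole at $s=d$), or more precisely to compare $Z_d^{(m)}(s)$ termwise with the full subgroup-growth zeta function $\zeta_{\Z^d}(s)=\zeta(s)\cdots\zeta(s-(d-1))$ from \eqref{eq:sgzf_Zd}, and check that the ratio is given by an Euler product converging in a half-plane $\re(s)>d-1+\varepsilon$. Evaluating that convergent Euler product at $s=d$ gives a finite constant; multiplying by $\mathrm{Res}_{s=d}\zeta_{\Z^d}(s)=\zeta(d-1)\zeta(d-2)\cdots\zeta(2)$ and using the Euler product $\zeta(k)=\prod_p(1-p^{-k})^{-1}$ to absorb those zeta values back into the per-prime factors should recombine everything into $\prod_p\bigl((1-p^{-1})\sum_{i=0}^m\pbinom{d}{i}p^{-i^2}/\prod_{j=1}^i(1-p^{-j})\bigr)$. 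Finally, applying a standard Tauberian theorem (e.g.\ Ikehara--Wiener, exactly as in the derivation of \eqref{eq:Zd_asymptotic}) to the simple pole of $Z_d^{(m)}(s)$ at $s=d$ yields $N_d^{(m)}(X)\sim (\text{Res}_{s=d}Z_d^{(m)})\cdot X^d/d$, which is the claimed formula.

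The main obstacle I expect is the bookkeeping in the local computation: taking Petrogradsky's local factor---a rational function in several variables indexed by partitions into at most $d$ parts---and correctly summing over the cotypes of local corank $\le m$ to land on the clean closed form $(1-p^{-1})\sum_{i=0}^m\pbinom{d}{i}p^{-i^2}/\prod_{j=1}^i(1-p^{-j})$. Here the Gaussian binomial $\pbinom{d}{i}$ should arise from counting the flag of the cyclic quotient structure (equivalently, the number of rank-$i$ direct summands, which is governed by the $p^{-1}$-binomial $\binom{d}{i}_{p^{-1}}$), the $p^{-i^2}$ from the ``volume'' contribution of a corank-$i$ local configuration, and the denominator $\prod_{j=1}^i(1-p^{-j})$ from summing the geometric series in the repeated elementary divisors. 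One subtlety is making sure the specialization of variables ($s_1=s$, and $s_2,\dots,s_d$ set so that only the index is weighted) is compatible with the region of convergence of Petrogradsky's identity, and that the termwise comparison with $\zeta_{\Z^d}(s)$ is legitimate; but these are standard once the local factor is in hand. A secondary check is that the $m=1$ case of the formula reduces to the Nguyen--Shparlinski/Petrogradsky constant $\theta_d$ in \eqref{eq:ns_cocyclic}, which it does: the $i=0$ term contributes $1$ and the $i=1$ term contributes $\pbinom{d}{1}p^{-1}/(1-p^{-1})=(p^{d-1}-1)/(p^d(1-p^{-1}))\cdot(1-p^{-1})^{-1}$ after the $(1-p^{-1})$ prefactor, matching $1+(p^{d-1}-1)/(p^{d+1}-p^d)$.
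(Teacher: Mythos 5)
Your overall strategy is exactly the paper's: specialize Petrogradsky's cotype zeta function to obtain the Dirichlet series $\zeta_{\Z^d}^{(m)}(s)$ counting sublattices of corank at most $m$, identify the simple pole at $s=d$, compute the residue, and finish with a Tauberian argument. You also correctly verify that the $m=1$ case recovers the Nguyen--Shparlinski constant $\theta_d$, which is a good sanity check.

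There is, however, a genuine gap precisely where you flag the ``main obstacle.'' You describe heuristically how $\pbinom{d}{i}$, $p^{-i^2}$, and $\prod_{j=1}^i(1-p^{-j})^{-1}$ ``should'' arise, but this narrative is not a derivation. What actually happens in the paper is considerably more involved. Specializing Petrogradsky's formula by setting $s_1=\cdots=s_m=s$ and letting $s_{m+1},\ldots,s_d\to\infty$ gives
\[
\zeta_{\Z^d}^{(m)}(s)=\zeta(s-(d-1))\,\zeta(2s-2(d-2))\cdots\zeta(ms-m(d-m))\,\prod_p f_{d,p}^{(m)}(s),
\]
with $f_{d,p}^{(m)}(s)=\sum_{\lambda\subseteq\{1,\ldots,m\}}w_{d,\lambda}(p^{-1})\prod_{j\in\lambda}p^{-js+j(d-j)}$, so there are $m$ zeta factors on the outside, not $d$. (Your plan to compare termwise with the full $\zeta_{\Z^d}(s)=\zeta(s)\cdots\zeta(s-(d-1))$ is a slight detour; only the first factor $\zeta(s-(d-1))$ carries the pole at $s=d$, while the others evaluate to $\prod_{j=2}^m\zeta(j^2)$ there and must subsequently be folded back into the Euler product.) One then has to evaluate $f_{d,p}^{(m)}(d)=\sum_{\lambda\subseteq\{1,\ldots,m\}}w_{d,\lambda}(q)\prod_{j\in\lambda}q^{j^2}$ with $q=p^{-1}$. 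Unwinding the definition $w_{d,\lambda}(q)=\sum_{\mu\subseteq\lambda}(-1)^{|\lambda|-|\mu|}\qbinom{d}{\mu}$ and swapping the order of summation yields
\[
f_{d,p}^{(m)}(d)=\sum_{\mu\subseteq\{1,\ldots,m\}}\qbinom{d}{\mu}\prod_{j\in\mu}q^{j^2}\prod_{j\notin\mu}(1-q^{j^2}),
\]
and then collapsing this to $\big(\prod_{j=1}^m(1-q^{j^2})\big)\sum_{i=0}^m\qbinom{d}{i}q^{i^2}/\prod_{j=1}^i(1-q^j)$ requires two nontrivial $q$-identities proved by induction (the paper's Lemmas \ref{lemma:qid} and \ref{lemma:qid2}), the first of which is a Durfee-square-type identity $\sum_{k=0}^n\qbinom{n}{k}q^{k^2+ek}\prod_{j=k+1+e}^{n+e}(1-q^j)=1$. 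These identities are the substance of the residue computation, and your proposal leaves them entirely to be discovered; without them, the claimed closed form is an unjustified guess. If you supply and prove those two lemmas (or an equivalent telescoping), the rest of your outline goes through as stated.
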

\noindent We recall in Section \ref{sec:petro} the definition of the $q$-binomial coefficient $\pbinom{d}{i}$.

Dividing by the number of all sublattices of index less than $X$ as
given in (\ref{eq:Zd_asymptotic}) gives the proportion of
sublattices with corank at most $m$.  

\begin{corollary} \label{cor:corank_m_density}
  As $X\to\infty$, 
\begin{equation}
  \label{eq:corank_m_density}
  \frac{N_d^{(m)}(X)}{N_d(X)}\sim
\prod_p \left(
\prod_{j=1}^{d}(1-p^{-j})\
\sum_{i=0}^m 
\pbinom{d}{i}\frac{p^{-i^2}}{\prod_{j=1}^{i}{(1-p^{-j})}}\right).
\end{equation}
\end{corollary}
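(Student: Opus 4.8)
The plan is to deduce Corollary~\ref{cor:corank_m_density} by dividing the asymptotic of Theorem~\ref{thm:ndmx} by the asymptotic~(\ref{eq:Zd_asymptotic}) for the full sublattice count $N_d(X)$. Both quantities are asymptotic to a nonzero constant times $X^d/d$: Theorem~\ref{thm:ndmx} gives
\[
N_d^{(m)}(X)\sim\frac{X^d}{d}\prod_p\left((1-p^{-1})\sum_{i=0}^m\pbinom{d}{i}\frac{p^{-i^2}}{\prod_{j=1}^i(1-p^{-j})}\right),
\]
while~(\ref{eq:Zd_asymptotic}) gives $N_d(X)\sim\frac{X^d}{d}\prod_{k=2}^d\zeta(k)$. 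Since $\prod_{k=2}^d\zeta(k)$ is a finite nonzero product, the ratio $N_d^{(m)}(X)/N_d(X)$ converges as $X\to\infty$ to the quotient of the two leading constants.

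The remaining task is the bookkeeping needed to recognize that quotient as the right-hand side of~(\ref{eq:corank_m_density}). Here I would expand each $\zeta(k)$, $k=2,\dots,d$, via its Euler product $\zeta(k)=\prod_p(1-p^{-k})^{-1}$, so that $\bigl(\prod_{k=2}^d\zeta(k)\bigr)^{-1}=\prod_p\prod_{k=2}^d(1-p^{-k})$, and then absorb the finite product $\prod_{k=2}^d(1-p^{-k})$ into the Euler product coming from Theorem~\ref{thm:ndmx}, combining it with the $(1-p^{-1})$ already present. Since $(1-p^{-1})\prod_{k=2}^d(1-p^{-k})=\prod_{j=1}^d(1-p^{-j})$, this produces exactly the local factor appearing in~(\ref{eq:corank_m_density}).

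The only step that requires any care — and it is mild — is the rearrangement of the product over primes: interchanging the finite product over $k$ with the infinite product over $p$, and taking the ratio of the two Euler products factor by factor. This is legitimate because each Euler product in play converges absolutely (convergence of the product in Theorem~\ref{thm:ndmx} is part of that statement, and $\prod_p(1-p^{-k})^{-1}$ converges for $k\ge 2$) and because every local factor is nonzero, so no cancellation issue arises. Beyond this the computation is purely formal, so I expect no genuine obstacle in proving the corollary.
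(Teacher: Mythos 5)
Your proposal is correct and follows exactly the paper's route: the authors likewise divide the leading constant of Theorem~\ref{thm:ndmx} by the leading constant $\zeta(d)\cdots\zeta(2)/d$ of~(\ref{eq:Zd_asymptotic}), expand the latter via Euler products as $\frac{1}{d}\prod_p\prod_{j=0}^{d-2}(1-p^{-d+j})^{-1}$, and absorb the resulting $\prod_{k=2}^d(1-p^{-k})$ into the local factor alongside $(1-p^{-1})$. There is no gap and no difference in method.
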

\noindent 
For example, the proportion of sublattices of $\Z^d$ of corank at most 2
converges to approximately 99.4\% as $d\to\infty$, and the proportion of sublattices of $\Z^d$ of corank at most $3$ converges to approximately 99.995\%.  Therefore, while sublattices of any fixed corank have positive
density among all sublattices of $\Z^d$, they become sparser as the
corank grows.  This confirms an expectation of Nguyen-Shparlinski.

Also of interest in our work is the method of proof.  Nguyen and
Shparlinski prove their results by counting solutions of linear
congruence equations. Our proofs extend
Petrogradsky's methods and make systematic use of the {\em cotype zeta
  function} of $\Z^d$, which he introduced in \cite{petro}.  This is a
multivariate generalization of the subgroup growth zeta function
$\zeta_{\Z^d}(s)$ from (\ref{eq:sgzf_Zd}).  Petrogradsky computes it
explicitly in terms of permutation descent polynomials.

\subsection{Coranks of lattices and cokernels of matrices in Hermite normal form}  Throughout this paper, for a ring $R$ we let $M_d(R)$ denote the $d\times d$ matrices with entries in $R$. For a finite abelian group $G$, we write $(G)_p$ for its Sylow $p$-subgroup.  Consider the distribution on finite abelian $p$-groups of rank at most $d$ that chooses a group $G$ of rank $r$ where $r\le d$ with probability
\begin{equation}\label{rankd_prob}
P^p_d(G) = |\aut(G)|^{-1} \bigg(\prod_{j=1}^d (1-p^{-j})\bigg)  \bigg(\prod_{j=d-r+1}^d (1-p^{-j})\bigg).
\end{equation}
It follows from a result of Cohen and Lenstra \cite[Theorem 6.1]{cohen_lenstra} that the right-hand side of \eqref{eq:corank_m_density} is equal to the product over all primes $p$ of the probability that a group chosen from $P^p_d$ has rank at most $m$.

Motivated by famous conjectures of Cohen and Lenstra on distributions of Sylow $p$-subgroups of class groups of number fields \cite{cohen_lenstra}, Friedman and Washington prove that the distribution of cokernels of $d\times d$ random 
matrices with entries in the $p$-adic integers $\Z_p$, drawn from Haar measure on the space of all such matrices, is the distribution of \eqref{rankd_prob} \cite[Proposition 1]{fw}. Stanley and Wang show that this distribution arises in the study of Smith normal forms of random $d \times d$ integer matrices with entries chosen uniformly from $[-k,k]$, as $k \to \infty$.  The Smith normal form of an integer matrix carries the same information as its cokernel.  As $k \to \infty$, each entry is uniformly
distributed modulo $p^r$ for each prime power, so
this distribution of cokernels matches the one studied by Friedman and
Washington, and therefore is equal to the one defined by
\eqref{rankd_prob}.  Going from a result for a single prime to a result involving infinitely many primes is often challenging.  Stanley and Wang prove that the probability that the cokernel of a random integer matrix chosen from the model described above has rank at most $m$ is given by the right-hand side of \eqref{eq:corank_m_density}  \cite[Theorem 4.13]{sw}.  The proof uses nontrivial results from number theory of Ekedahl and Poonen on greatest common divisors of outputs of multivariable polynomials \cite{ek, poonen}.

We now interpret of Corollary \ref{cor:corank_m_density} in terms of cokernels of special classes of random integer matrices.  A nonsingular $M \in M_{d}(\Z)$ with entries $a_{ij}$ is in \emph{Hermite normal form} if:
\begin{enumerate}
\item $M$ is upper triangular, and
\item $0 \le a_{ij} < a_{jj}$ for $1 \le i < j \le d$.
\end{enumerate}
We recall a basic fact about lattices and matrices in Hermite normal form.  
\begin{prop}\label{hnf_prop} 
Every sublattice $\Lambda \subseteq \Z^d$ is the row span of a unique matrix $H(\Lambda)$ in Hermite normal form.  Moreover, $\Z^d/\Lambda \cong \coker(H(\Lambda))$, which implies $[\Z^d\colon \Lambda] = \det(H(\Lambda))$.  

This gives a bijection between the set of sublattices of $\Z^d$ of index less than $X$ and nonsingular $d \times d$ matrices in Hermite normal form with determinant less than $X$.
\end{prop}
Let $\HH_d(\Z) \subset M_d(\Z)$ denote the subset of nonsingular matrices in Hermite normal form and $\HH_d(X) \subset \HH_d(\Z)$ denote the subset of these matrices with determinant less than $X$.  By Proposition \ref{hnf_prop}, Corollary \ref{cor:corank_m_density} is equivalent to the following statement.
\begin{corollary}\label{HNF_prank}
We have
\[
\lim_{X\rightarrow \infty} \frac{\#\{M \in \HH_d(X) \colon \rank(\coker(M)) \le m \} }{\#\HH_d(X) } = \prod_p \left(
\prod_{j=1}^{d}(1-p^{-j})\
\sum_{i=0}^m 
\pbinom{d}{i}\frac{p^{-i^2}}{\prod_{j=1}^{i}{(1-p^{-j})}}\right).
\]
\end{corollary}

In Section \ref{sec:universality} we consider the distribution of Sylow $p$-subgroups of cokernels of matrices in Hermite normal form, giving an explanation for this result. 
\begin{theorem}\label{dist_ppart}
Let $G$ be a finite abelian $p$-group of rank $r \le d$.  Then 
\[
\lim_{X\to\infty} \frac{\#\left\{\Lambda \subseteq \Z^d \colon [\Z^d\colon \Lambda] < X \text{ and }  (\Z^d/\Lambda)_p \cong G\right\}}{\#\left\{\Lambda \subseteq \Z^d \colon [\Z^d\colon \Lambda] < X \right\}} = P^p_d(G).
\]
Equivalently,
\[
\lim_{X\to\infty} \frac{\#\left\{M \in \HH_d(X) \colon \coker(M)_p \cong G\right\}}{\#\HH_d(X)} = P^p_d(G).
\]
\end{theorem}
\noindent We note that this result does not directly imply Corollary \ref{HNF_prank} because of subtleties involved in going from a single prime to a product over all primes.

The main point of Theorem \ref{dist_ppart} is that Sylow $p$-subgroups of cokernels of matrices in Hermite normal form follow the same distribution as Sylow $p$-subgroups of cokernels of Haar random matrices in $M_d(\Z_p)$.  This fits in with universality results for cokernels of families of random integer and $p$-adic matrices due to Wood \cite{wood}.  However, Wood's results do not directly imply Theorem \ref{dist_ppart} because, in the language of \cite[Definition 1]{wood}, matrices in Hermite normal form are not $\epsilon$-balanced, as many entries are fixed to be $0$.

\subsection{Related work}

More general functions of the type considered in this paper have their
origin in Igusa's study of zeta functions of algebraic groups
\cite{I}.  This work has become an essential tool in the theory of
zeta functions of groups and rings and has been extended in various
directions.  See for example the paper of du Sautoy and Lubotzky
\cite{duSL} and the further references listed in Section
\ref{subsec:classical}.  In this context, Petrogradsky's local zeta
function and generalizations thereof arise naturally as multivariate
$p$-adic integrals.  This point of view is developed in Voll
\cite{Voll}, in both the context of counting subgroups of a finitely
generated torsion-free nilpotent group and in the study of their
representation zeta functions.  Further generalizations of Igusa's local zeta functions are introduced in Klopsch-Voll \cite{KlopschVoll} and Schein-Voll \cite{MR3338607}, leading to numerous applications in subgroup, subring and represention growth; see e.g. \cite{MR4138699,MR3846349,MR3856853,ScheinVollII,StasinskiVollB,StasinskiVoll1}.

In the theory of autorphic forms related functions appear in the
computation of Fourier coefficients of Eisenstein series on $GL_{2n}$ and 
the local singular series of an $n$-by-$n$ square matrix, as noted in
the papers of F. Sato \cite{sato} and Beineke-Bump \cite{bb}.  Sato
raises the interesting open question of finding a corresponding
relation between local singular series and the
enumeration of subgroups of an abelian group in the symplectic case.

\subsection*{Outline of the paper}

We review Petrogradsky's work in Section \ref{sec:petro}.  In Section
\ref{sec:proofs} we prove our main results on the distribution of the
corank.  In Section \ref{sec:universality} we prove Theorem \ref{dist_ppart}. 
The utility of the cotype zeta function in the
resolution of these corank problems suggests that it may be fruitful
to introduce multivariate Dirichlet series to address analogous
subgroup and subring growth problems in a broader context.  We
elaborate on this and present some further concluding remarks in
Section \ref{sec:conclusion}.

\subsection*{Acknowledgments}

It is a pleasure to thank Professors G. Bhowmik, M. du Sautoy,
J. Fulman, S. Payne, V. Petrogradsky, I. Shparlinski, and M. Wood for
their encouragement and suggestions.  We are also grateful to
Professor C. Voll for his very helpful comments on an earlier version of
this paper.  The first named author is supported by NSF grant DMS 1601289
  and PSC-CUNY grant 69658-00 47.
  The second named author received support from NSA Young Investigator Grant
  H98230-16-10305, an AMS-Simons Travel Grant, and NSF grant DMS 1802281.
    The third named author was supported by NSF grant DMS 1149054.

%%%%%%%%%%%%%%%%%%%%%%%%%%%%%%%%%%%%%%%%
%%%%%%%%%%%%%%%%%%%%%%%%%%%%%%%%%%%%%%%%
\section{The cotype zeta function of $\Z^d$}
\label{sec:petro}
%%%%%%%%%%%%%%%%%%%%%%%%%%%%%%%%%%%%%%%%
%%%%%%%%%%%%%%%%%%%%%%%%%%%%%%%%%%%%%%%%

We recall Petrogradsky's definition of the \emph{cotype zeta function} of $\Z^d$, which he calls the \emph{multiple zeta function} of $\Z^d$.  
\begin{definition}\cite{petro}\label{petro_zeta_def}
Let $d$ be a positive integer and let $a_{\alpha}(\Z^d)$ be the number of subgroups $\Lambda \subseteq \Z^d$ of cotype $\alpha$.  We define the \emph{cotype zeta function} of $\Z^d$:
\[
\zeta_{\Z^d}(s_1,\ldots, s_d) = \sum_{H \subseteq \Z^d} \alpha_1(H)^{-s_1}\cdots \alpha_d(H)^{-s_d} = \sum_{\alpha = (\alpha_1,\ldots, \alpha_d)} a_{\alpha}(\Z^d) \cdot \alpha_1^{-s_1}\cdots \alpha_d^{-s_d}.
\]
\end{definition}
\noindent Note that $\zeta_{\Z^d}(s,\ldots, s) = \zeta_{\Z^d}(s)$, so
this multivariable function generalizes the subgroup growth zeta
function of $\Z^d$.

The subgroup growth zeta function of $\Z^d$ has an Euler product, and Petrogradsky shows that this multivariable generalization has one as well.  
\begin{lemma}\cite[Lemma 1.1]{petro}
For each $d \ge 1$ we have
\[
\zeta_{\Z^d}(s_1,\ldots, s_d) = \prod_p \zeta_{\Z^d,p}(s_1,\ldots, s_d),
\]
where the local factor for each prime $p$ is defined as
\[
\zeta_{\Z^d,p}(s_1,\ldots, s_d) = 
\sum_{m=0}^\infty \sum_{H \subseteq \Z^d \atop 
\left[\Z^d:H\right] = p^m} \alpha_1(H)^{-s_1}\cdots \alpha_d(H)^{-s_d}.
\]
\end{lemma}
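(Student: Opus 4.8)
The plan is to exploit the fact that a finite abelian group, and hence a finite-index sublattice, is assembled from its $p$-primary pieces, and that the cotype behaves multiplicatively under this assembly.

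First I would set up a bijection between the set of finite-index sublattices $H\subseteq\Z^d$ and the set of families $(H^{(p)})_p$, indexed by the primes, in which each $H^{(p)}\subseteq\Z^d$ has $p$-power index and $H^{(p)}=\Z^d$ for all but finitely many $p$. Given $H$, decompose the finite abelian group $\Z^d/H$ into its primary components $\Z^d/H\cong\bigoplus_p(\Z^d/H)_p$, and let $H^{(p)}$ be the kernel of the composite $\Z^d\twoheadrightarrow\Z^d/H\twoheadrightarrow(\Z^d/H)_p$; then $\Z^d/H^{(p)}\cong(\Z^d/H)_p$ is a finite abelian $p$-group, so $[\Z^d:H^{(p)}]$ is a power of $p$, and $\bigcap_p H^{(p)}=H$ because the diagonal map $\Z^d/H\to\bigoplus_p(\Z^d/H)_p$ is an isomorphism. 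Conversely, a family $(H^{(p)})_p$ as above is sent to $\bigcap_p H^{(p)}$, and these two assignments are mutually inverse: this is the lattice-theoretic form of the Chinese Remainder Theorem (the indices $[\Z^d:H^{(p)}]$ being pairwise coprime).

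Next comes the \emph{key compatibility}: the cotype is multiplicative along this bijection. If $\alpha(H)=(\alpha_1,\ldots,\alpha_d)$, then, writing $v_p$ for the $p$-adic valuation, the divisibility chain $\alpha_d\mid\cdots\mid\alpha_1$ gives $v_p(\alpha_d)\le\cdots\le v_p(\alpha_1)$, so the $p$-primary component $(\Z^d/H)_p\cong\bigoplus_{i=1}^d\Z/p^{v_p(\alpha_i)}\Z$ already has its cyclic summands listed in the order required for a valid invariant-factor decomposition. By elementary divisor theory it follows that $\alpha(H^{(p)})=(p^{v_p(\alpha_1)},\ldots,p^{v_p(\alpha_d)})$, and hence $\alpha_i(H)=\prod_p\alpha_i(H^{(p)})$ for each $i$, with all but finitely many factors equal to $1$. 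Now I would assemble the Dirichlet series. For $\re(s_i)$ sufficiently large — say $\re(s_i)>d$ for all $i$, since $\bigl|\alpha_1^{-s_1}\cdots\alpha_d^{-s_d}\bigr|=\prod_i\alpha_i^{-\re(s_i)}\le[\Z^d:H]^{-\min_i\re(s_i)}$ and $\sum_H[\Z^d:H]^{-\sigma}=\zeta_{\Z^d}(\sigma)$ converges for $\sigma>d$ — the series defining $\zeta_{\Z^d}(s_1,\ldots,s_d)$ converges absolutely, which licenses the rearrangement
\[
\zeta_{\Z^d}(s_1,\ldots,s_d)=\sum_{(H^{(p)})_p}\prod_{i=1}^d\Bigl(\prod_p\alpha_i(H^{(p)})\Bigr)^{-s_i}=\prod_p\Biggl(\sum_{H^{(p)}\subseteq\Z^d}\prod_{i=1}^d\alpha_i(H^{(p)})^{-s_i}\Biggr)=\prod_p\zeta_{\Z^d,p}(s_1,\ldots,s_d),
\]
where for each $p$ the inner sum ranges over sublattices of $p$-power index (the term $H^{(p)}=\Z^d$ contributing $1$), matching the definition of the local factor. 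The identity then propagates to the full domain by analytic continuation.

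The step I expect to be the crux — rather than a genuine obstacle — is the multiplicativity of the cotype, i.e.\ the computation of $\alpha(H^{(p)})$ from $\alpha(H)$: the essential point is that passing to the $p$-part of a finite abelian group preserves the divisibility ordering of the invariant factors, so no reindexing occurs. Once that is in hand, the remainder is the standard local-to-global unwinding of an Euler product together with routine convergence bookkeeping.
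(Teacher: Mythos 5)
Your proof is correct and follows the standard argument one would give for this lemma (the paper itself simply cites Petrogradsky without reproducing the proof): decompose $\Z^d/H$ into its $p$-primary pieces to get the bijection $H \leftrightarrow (H^{(p)})_p$, observe that the divisibility chain $\alpha_{i+1}\mid\alpha_i$ forces $v_p(\alpha_d)\le\cdots\le v_p(\alpha_1)$ so that the $p$-part is already in invariant-factor form and $\alpha_i(H)=\prod_p \alpha_i(H^{(p)})$, and then unwind the Euler product under absolute convergence. The one point worth flagging as hand-waved — your appeal to a ``lattice-theoretic CRT'' for surjectivity of $\Z^d/H \to \bigoplus_p \Z^d/H^{(p)}$ — does hold, since $[\Z^d:H^{(p)}]\cdot\Z^d\subseteq H^{(p)}$ makes each projection factor through $\Z^d/[\Z^d:H^{(p)}]\Z^d$ and the indices are pairwise coprime; and your crude convergence bound $\re(s_i)>d$ lands inside Petrogradsky's true region $\re(z_j)>1$, which is all that is needed before invoking analytic continuation.
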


One of the main results of \cite{petro} is the computation of the local factors of the cotype zeta function of $\Z^d$ in terms of permutation descents and $q$-binomial coefficients.  We fix some notation and recall basic properties of these combinatorial objects following \cite[Section 3]{petro}:
\begin{eqnarray*}
[n]_q & = & \frac{1-q^n}{1-q} = 1 + q+ \cdots + q^{n-1}; \\
\qfac{n} & = & [n]_q [n-1]_q \cdots [2]_q; \\
\qbinom{n}{j} & = &  \frac{\qfac{n}}{\qfac{j} \qfac{n-j}};\\
\qbinom{m_1+m_2+\cdots + m_k}{m_1, m_2,\ldots, m_k} & = & 
\frac{\qfac{m_1+m_2+\cdots + m_k}}{\qfac{m_1} \qfac{m_2}\cdots \qfac{m_k}}\\
& = & \qbinom{m_1+ m_2 + \cdots + m_k}{m_1} \qbinom{m_2+\cdots + m_k}{m_2} \cdots \qbinom{m_{k-1} + m_k}{m_{k-1}}.
\end{eqnarray*}
Let $\N_d = \{1,2,\ldots, d\}$ and suppose $\lambda \subseteq \N_d$.  If $\lambda = \emptyset$ then we set $\qbinom{d}{\lambda} = 1$.  Otherwise, if $\lambda = \{\lambda_1,\ldots, \lambda_k\}$, where $d \ge \lambda_1 > \lambda_2 > \cdots > \lambda_k \ge 1$, let $|\lambda| = k$ and $m_i = \lambda_i - \lambda_{i+1}$ for $0\le i \le k$, where we set $\lambda_{k+1} = 0$ and $\lambda_0 = d$.  Note that $d = m_0 + m_1 + \cdots + m_k$.  We define the following polynomials in $q$:
\begin{eqnarray}
\qbinom{d}{\lambda} & = &  \qbinom{d}{m_0, m_1, \ldots, m_k},\ \ \ \lambda \subseteq \N_d; \\
w_{d,\lambda}(q) & = &  \sum_{\mu \subseteq \lambda} (-1)^{|\lambda| - |\mu|} \qbinom{d}{\mu},\ \ \ \lambda \subseteq \N_d.
\end{eqnarray}

\begin{theorem}\cite[Theorem 3.1]{petro}\label{petro_main_thm}
Consider the cotype zeta function of $\Z^d$.  For each $j$ satisfying $1\le j \le d$, we introduce the new variable $z_ j = s_1 + \cdots + s_j - j(d-j)$. Then
\begin{enumerate}
\item 
\begin{eqnarray}
\zeta_{\Z^d}(s_1,\ldots, s_d) & = & \zeta(z_1)\zeta(z_2) \cdots \zeta(z_d) \cdot f(s_1,\ldots, s_{d-1}),\ \ \text{where}\label{cotype_eqn} \\
f(s_1,\ldots, s_{d-1}) & = & \prod_p \left(1 + \sum_{\emptyset \neq \lambda \subseteq \N_{d-1}} w_{d,\lambda}\big(p^{-1}\big) \prod_{j \in \lambda} p^{-z_j} \right),
\end{eqnarray}
with the sum taken over all nonempty subsets $\lambda \subseteq \N_{d-1}$.

\item The region of absolute convergence of the cotype zeta function is $\re(z_j) > 1,\\ 
j = 1, \ldots, d$.
\item The region of absolute convergence of the product over primes is $\re(z_j) > 0, \\ 
j = 1, \ldots, d-1$; and (\ref{cotype_eqn}) is the analytic continuation to this region.

\item The local factors are rational functions in the variables $t_i = p^{-z_i},\ i = 1,\ldots, d$:
\begin{equation}\label{eq:zetap_local}
\zeta_{\Z^d,p}(s_1,\ldots, s_d) = \frac{\sum_{\lambda \subseteq\N_{d-1}} w_{d,\lambda}(p^{-1}) \prod_{j\in \lambda} t_j}{(1-t_1) (1-t_2) \cdots (1-t_d)}.
\end{equation}

\end{enumerate}
\end{theorem}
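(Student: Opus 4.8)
The plan is to evaluate the local factor $\zeta_{\Z^d,p}(s_1,\dots,s_d)$ in closed form as a rational function in the $t_j=p^{-z_j}$ by a transfer‑matrix argument, to deduce (1) and (4) by reassembling the Euler product of the previous lemma and matching the answer against the polynomials $w_{d,\lambda}$, and then to read off the convergence statements (2) and (3) from this closed form.

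The input is a classical enumeration of finite‑index sublattices of $\Z_p^d$ by cotype, going back to Birkhoff: for a partition $\lambda=(\lambda_1\ge\cdots\ge\lambda_d\ge 0)$ with conjugate $\lambda'$, the number of $H\subseteq\Z_p^d$ with $\Z_p^d/H\cong\bigoplus_{i=1}^d\Z/p^{\lambda_i}\Z$ is
\[
a_\lambda(\Z_p^d)=\prod_{i\ge1}p^{\,\lambda'_{i+1}(d-\lambda'_i)}\,\pbinomreg{d-\lambda'_{i+1}}{\lambda'_i-\lambda'_{i+1}}
\]
(for $N>\lambda_1$ this counts the subgroups of $(\Z/p^N)^d$ of type $\lambda$; alternatively it follows by inducting on $d$ via a coordinate projection $\Z_p^d\to\Z_p$). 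Substituting into $\zeta_{\Z^d,p}=\sum_\lambda a_\lambda(\Z_p^d)\prod_i p^{-\lambda_i s_i}$, the key first step is to reindex by $\mu:=\lambda'$, which ranges over all partitions with parts at most $d$. Using $\sum_i\lambda_i s_i=\sum_{k\ge1}(s_1+\cdots+s_{\mu_k})$ and setting $\mu_0:=d$, both $a_\lambda(\Z_p^d)$ and the monomial factor through the consecutive pairs $(\mu_{k-1},\mu_k)$, so that
\[
\zeta_{\Z^d,p}=\sum_{d=\mu_0\ge\mu_1\ge\mu_2\ge\cdots}\ \prod_{k\ge1}\Bigl(p^{\,\mu_k(d-\mu_{k-1})}\,\pbinomreg{d-\mu_k}{\mu_{k-1}-\mu_k}\,p^{-(s_1+\cdots+s_{\mu_k})}\Bigr),
\]
the outer sum running over weakly decreasing sequences that are eventually $0$.

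This is a transfer‑matrix sum over non‑increasing walks in $\{0,1,\dots,d\}$ from $d$ to $0$: a self‑loop $\mu_{k-1}=\mu_k=a$ contributes exactly $p^{\,a(d-a)-(s_1+\cdots+s_a)}=p^{-z_a}=t_a$, while a strict descent $a\to b$ contributes $t_b\cdot p^{-b(a-b)}\pbinomreg{d-b}{a-b}$. Summing the geometric series over the maximal runs of equal values collapses the sum to one over strictly decreasing skeletons $d=b_0>b_1>\cdots>b_r=0$, equivalently over subsets $\mu=\{b_1,\dots,b_{r-1}\}\subseteq\N_{d-1}$; the skeleton indexed by $\mu$ contributes $\frac{1}{1-t_d}\bigl(\prod_{j\in\mu}\frac{t_j}{1-t_j}\bigr)C_\mu$, where $C_\mu=\prod_{i=1}^{r}p^{-b_i(b_{i-1}-b_i)}\pbinomreg{d-b_i}{b_{i-1}-b_i}$. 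A direct computation with the $q$‑factorial identities recorded before Theorem~\ref{petro_main_thm} then identifies $C_\mu=\pbinom d\mu$: after collecting the powers of $p$ (using $\pbinom nj=p^{-j(n-j)}\pbinomreg nj$), the product defining $C_\mu$ telescopes into the single $q$‑multinomial $\pbinom d\mu$. Hence $\zeta_{\Z^d,p}=\frac{1}{1-t_d}\sum_{\mu\subseteq\N_{d-1}}\pbinom d\mu\prod_{j\in\mu}\frac{t_j}{1-t_j}$. Clearing denominators over $(1-t_1)\cdots(1-t_d)$ and applying Möbius inversion on the Boolean lattice of subsets of $\N_{d-1}$ — which is just the defining relation $w_{d,\lambda}=\sum_{\mu\subseteq\lambda}(-1)^{|\lambda|-|\mu|}\qbinom d\mu$ — rewrites this as the rational expression of part (4), and pulling $\prod_p(1-p^{-z_j})^{-1}=\zeta(z_j)$ out of the Euler product over $p$ gives part (1). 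I expect this combinatorial core — the identity $C_\mu=\pbinom d\mu$ together with the bookkeeping in the Möbius inversion — to be the main obstacle, the powers of $p$ in the $q$‑multinomials being the delicate point; if one does not wish to cite Birkhoff's formula, proving the count $a_\lambda(\Z_p^d)$ is a second place where real work is concentrated.

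Finally, (2) and (3) follow from the closed form. Taking logarithms in the Euler product, $\prod_p(1-t_j)^{-1}$ contributes $\sum_p p^{-z_j}+O\bigl(\sum_p p^{-2\re(z_j)}\bigr)$, which converges if and only if $\re(z_j)>1$; this is the binding constraint for $j=1,\dots,d$, giving (2). On the other hand $w_{d,\{j\}}(p^{-1})=\pbinom dj-1=O(p^{-1})$ (the $w_{d,\lambda}$ with $|\lambda|\ge 2$ being of even higher order in $p^{-1}$), so the product $\prod_p\bigl(1+\sum_{\emptyset\ne\lambda\subseteq\N_{d-1}}w_{d,\lambda}(p^{-1})\prod_{j\in\lambda}p^{-z_j}\bigr)$ converges absolutely already for $\re(z_j)>0$, $j=1,\dots,d-1$; since the $\zeta(z_j)$ continue meromorphically to $\C$, identity (\ref{cotype_eqn}) is then the claimed analytic continuation to this larger region, all of its poles coming from the factors $\zeta(z_j)$.
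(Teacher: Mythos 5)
This statement is cited from Petrogradsky's paper and is not proved in the text you are studying; the authors only sketch, at the end of Section~\ref{sec:petro}, that Petrogradsky works via a cotype-preserving bijection between finite-index sublattices of $\Z^d$ and subgroups of the finite group $\Z^d/N\Z^d$, counted using Butler's $q$-binomial formulas, and that du Sautoy--Lubotzky obtain the same local factor from a $p$-adic integral over $\GL_d(\Z_p)$ via the Iwahori decomposition. Your argument is correct and is essentially the Petrogradsky route in disguise: the enumerative input $a_\lambda(\Z_p^d)=\prod_{i\ge1}p^{\lambda'_{i+1}(d-\lambda'_i)}\pbinomreg{d-\lambda'_{i+1}}{\lambda'_i-\lambda'_{i+1}}$ is exactly the paper's Proposition~\ref{petro_prop} (Petrogradsky's Eq.~(19)), which in turn is the Butler/Birkhoff count. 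What you add is the transfer-matrix organization of $\sum_\lambda a_\lambda\prod_i p^{-\lambda_i s_i}$ as a sum over non-increasing walks $d=\mu_0\ge\mu_1\ge\cdots\to 0$, with a self-loop at $a$ contributing $t_a$ and a strict descent $a\to b$ contributing $t_b\,p^{-b(a-b)}\pbinomreg{d-b}{a-b}$; I checked that the geometric series over each run collapses correctly, that the product of descent weights along a skeleton $\{b_1>\cdots>b_{r-1}\}$ telescopes to $\pbinom d\mu$ (the factorial parts telescope to $[d]_p!/\prod_i[b_{i-1}-b_i]_p!$, and the power-of-$p$ bookkeeping amounts to the identity $\sum_{i<j}m_im_j=\sum_j b_j(b_{j-1}-b_j)$), and that the final regrouping over $\lambda=\mu\cup S$ reproduces the defining relation for $w_{d,\lambda}$, giving parts~(1) and~(4). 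The convergence claims~(2) and~(3) then follow from $w_{d,\lambda}(p^{-1})=O(p^{-|\lambda|})$ as you say, so nothing is missing. The net effect is a clean rederivation of Petrogradsky's theorem from his own coefficient formula; a genuinely different proof would be the du Sautoy--Lubotzky $p$-adic integral, which this is not.
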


The polynomials $w_{d,\lambda}(q)$ that arise have been studied
extensively in the combinatorial literature.  The first part of
Theorem \ref{thm_perm} below is stated in \cite[Theorem 3.1
(2)]{petro}, while the other two parts are due to Stanley
\cite[Corollary 3.2]{stanley_jcta} and \cite[Section 2.2.5]{stanley_ec1}.

\begin{definition}
  Let $\pi \in S_d$ be a permutation.  We call $i \in \{1,2\ldots,
  d-1\}$ a \emph{descent} of $\pi$, provided that $\pi(i) > \pi(i+1)$.
  For $\pi \in S_d$, let $D'(\pi)$ denote its set of descents.  %Let
 % $\lambda \subseteq \N_{d-1}$ and denote by $D(\lambda)$ the set of
%  all permutations $\pi \in S_d$ with $D'(\pi) = \lambda$.

  A pair $(i,j)$ is called an \emph{inversion} of $\pi$ if and only if
  $i < j$ and $\pi(i) > \pi(j)$.  Let $\operatorname{inv}(\pi)$ denote
  the number of inversions of $\pi$. 
\end{definition}
\noindent Note that $d$ cannot be a descent of a permutation in $S_d$.

\begin{theorem}\label{thm_perm}
Let $\lambda \subseteq \N_{d-1}$ be fixed.
\begin{enumerate}
\item There exists a number $N \ge |\lambda|$ such that $w_{d,\lambda}(q)$ is a polynomial in $q$ with nonnegative integer coefficients of the form 
\[
w_{d,_\lambda}(q) = q^N + \sum_{i > N} \tau_i q^i.
\]

\item We have that 
\[
w_{d,\lambda}(q) = \sum_{\pi \in S_d \atop D'(\pi) = \lambda} q^{\operatorname{inv}(\pi)}.
\]

\item We have that 
\[
w_{d,\lambda}(q) = \det\left( \qbinom{d-\lambda_{i+1}}{\lambda_j-\lambda_{i+1}}\ \mid\ 0 \le i,j \le k\right).
\]
\end{enumerate}

\end{theorem}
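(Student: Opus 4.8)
The plan is to take part~(2) as the central identity, deduce part~(1) from it, and establish part~(3) by a determinant evaluation that reproduces the alternating sum defining $w_{d,\lambda}(q)$. For part~(2), the key input is the classical theorem of MacMahon (see, e.g., \cite{stanley_ec1}) identifying a $q$-multinomial coefficient with the inversion generating function over permutations with bounded descent set: if $\mu\subseteq\N_{d-1}$ determines the composition $(m_0,m_1,\ldots)$ of $d$ as in the excerpt, then
\[
\qbinom{d}{\mu}=\sum_{\pi\in S_d,\ D'(\pi)\subseteq\mu}q^{\operatorname{inv}(\pi)}
\]
(here one uses that $\qbinom{d}{\mu}$ is symmetric in its parts, so the order in which the increasing runs of $\pi$ are listed is immaterial). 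Granting this, Möbius inversion on the Boolean lattice of subsets of $\lambda$ finishes it: setting $g(\mu)=\sum_{D'(\pi)\subseteq\mu}q^{\operatorname{inv}(\pi)}$ and $f(\mu)=\sum_{D'(\pi)=\mu}q^{\operatorname{inv}(\pi)}$, one has $g(\mu)=\sum_{\nu\subseteq\mu}f(\nu)$, hence $f(\lambda)=\sum_{\mu\subseteq\lambda}(-1)^{|\lambda|-|\mu|}g(\mu)=\sum_{\mu\subseteq\lambda}(-1)^{|\lambda|-|\mu|}\qbinom{d}{\mu}=w_{d,\lambda}(q)$, which is exactly~(2).

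Part~(1) then follows. Since $w_{d,\lambda}(q)=\sum_{D'(\pi)=\lambda}q^{\operatorname{inv}(\pi)}$, it is a polynomial in $q$ with nonnegative integer coefficients, and it is nonzero because every $\lambda\subseteq\N_{d-1}$ is the descent set of some permutation. Any $\pi$ with $D'(\pi)=\lambda$ has at least $|\lambda|$ inversions, since each descent position $i$ gives the distinct inversion $(i,i+1)$; hence the least exponent $N$ appearing in $w_{d,\lambda}(q)$ satisfies $N\ge|\lambda|$. Finally, the coefficient of $q^N$ equals $1$ because there is a unique permutation with descent set exactly $\lambda$ of minimal inversion number; this permutation is built from the composition attached to $\lambda$ by swapping only within each maximal run of consecutive descents, and a short direct argument (or the fact that permutations with a fixed descent set index the standard Young tableaux of a ribbon shape, which has a unique tableau of minimal major index) shows any other permutation with the same descent set has strictly more inversions.

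For part~(3), write $M=\big(\qbinom{d-\lambda_{i+1}}{\lambda_j-\lambda_{i+1}}\big)_{0\le i,j\le k}$ with $\lambda_0=d$ and $\lambda_{k+1}=0$. Because $\lambda$ is strictly decreasing, $M_{ij}=0$ whenever $j\ge i+2$ (the lower index of the $q$-binomial becomes negative), while $M_{i,i+1}=1$ and the first column is identically $1$; thus $M$ is lower-Hessenberg with $1$'s on the superdiagonal. Iterating cofactor expansion along the last column gives the recursion $Q_j=\sum_{i=0}^{j}(-1)^{j-i}M_{j,i}Q_{i-1}$ for the leading principal minors $Q_j$ (with $Q_{-1}=1$), and unrolling it expresses $\det M=Q_k$ as an alternating sum over decreasing chains $k\ge i_1>i_2>\cdots>i_{\ell-1}\ge1$, with sign $(-1)^{|\lambda|-(\ell-1)}$ and weight $M_{k,i_1}M_{i_1-1,i_2}\cdots M_{i_{\ell-1}-1,0}$. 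Such a chain corresponds bijectively to the subset $\mu=\{\lambda_{i_1},\ldots,\lambda_{i_{\ell-1}}\}\subseteq\lambda$ of size $\ell-1$, and the product rule $\qbinom{d}{m_0}\qbinom{d-m_0}{m_1}\cdots=\qbinom{d}{m_0,m_1,\ldots}$ collapses the weight to $\qbinom{d}{\mu}$, so $\det M=\sum_{\mu\subseteq\lambda}(-1)^{|\lambda|-|\mu|}\qbinom{d}{\mu}=w_{d,\lambda}(q)$. (Alternatively, one can read $M$ as a Lindström--Gessel--Viennot matrix for non-intersecting lattice paths, with $q$ recording area.)

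The step I expect to demand the most care is the bookkeeping in part~(3): one must track the signs produced by the iterated Hessenberg expansion and check that the telescoping of $q$-binomial coefficients lands exactly on the definition of $w_{d,\lambda}(q)$. The only genuinely non-formal point in part~(1) is the uniqueness of the minimal-inversion permutation with a prescribed descent set; everything else reduces to MacMahon's identity and formal manipulation.
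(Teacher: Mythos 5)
The paper does not actually prove Theorem~\ref{thm_perm}. Immediately before stating it, the text says that part~(1) is taken from Petrogradsky \cite[Theorem 3.1 (2)]{petro} and that parts~(2) and~(3) are due to Stanley \cite{stanley_jcta, stanley_ec1}; no proof is supplied. So there is no ``paper's own proof'' to compare against --- your proposal supplies an argument where the paper only cites.

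That said, your proposal is essentially correct and follows the standard combinatorial route, which is in substance Stanley's. For part~(2), the identity $\qbinom{d}{\mu}=\sum_{D'(\pi)\subseteq\mu}q^{\operatorname{inv}(\pi)}$ is the inversion--generating function over minimal-length coset representatives for the Young subgroup attached to $\mu$ (calling it ``MacMahon'' is a little loose but standard), and M\"obius inversion on the Boolean lattice then gives exactly Petrogradsky's alternating-sum definition of $w_{d,\lambda}$. Your parenthetical about symmetry of $\qbinom{d}{\mu}$ in its parts is doing real work: Petrogradsky's composition $(m_0,\dots,m_k)$ has partial sums $d-\lambda_1,\dots,d-\lambda_k$, so you need to reverse the parts to land on the descent set $\lambda$ itself. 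For part~(1), nonnegativity and the bound $N\ge|\lambda|$ come cleanly from~(2); the uniqueness of the minimal-statistic permutation with descent set exactly $\lambda$ (so that the bottom coefficient is $1$) is correct, though you switch between $\operatorname{inv}$ and $\operatorname{maj}$ mid-argument --- this is harmless because the two statistics are equidistributed over permutations with a fixed descent set, but you should say so. For part~(3), your observation that $M_{ij}=0$ for $j\ge i+2$, $M_{i,i+1}=1$, and $M_{i,0}=1$ is correct, and the Hessenberg cofactor recursion $Q_j=\sum_{i=0}^{j}(-1)^{j-i}M_{j,i}Q_{i-1}$ does unroll to the inclusion--exclusion sum once one uses the telescoping $\qbinom{d}{\mu\cup\{i_0\}}=\qbinom{d}{\mu}\qbinom{d-\lambda_{i_0}}{\cdots}$ structure; the sign count $(-1)^{|\lambda|-|\mu|}$ comes out right. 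The approach buys an elementary, self-contained derivation at the cost of the careful Hessenberg bookkeeping you yourself flag; the paper instead outsources the whole theorem to the literature, which is sufficient for its purposes since $w_{d,\lambda}$ is used only as a black box in later sections.
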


\begin{comment}
We highlight one more result of Petrogradsky that we apply in Section \ref{sec:universality}. 
\begin{prop}\cite[Equation (19)]{petro}\label{petro_prop}
Let $\nu = (\nu_1,\ldots, \nu_d)$ be a tuple of nonnegative integers, and let $\nu_i'$ denote the number of $\nu_j \ge i$.  Then the $p^{-\nu_1 s_1 - \nu_2 s_2-\cdots - \nu_d s_d}$ coefficient of $\zeta_{\Z^d,p}(s_1,\ldots, s_d)$ is 
\[
F(d,\nu) = \prod_{i \ge 1} p^{\nu_{i+1}' (d-\nu_i')} \pbinomreg{d-\nu_{i+1}'}{\nu_i'-\nu_{i+1}'}.
\]
\end{prop}
Let $\lambda =
(\lambda_1,\ldots, \lambda_r)$ be a partition.  A finite abelian $p$-group $G$ has \emph{type} $\lambda$ if
\[
G \cong \Z/p^{\lambda_1}\Z \times \cdots \times \Z/p^{\lambda_r}\Z.
\]
Note that $r$ is equal to the rank of $G$.  Set $\nu_1 = N$ and $\nu^* = (N,\ldots, N)$ where there are $d$ entries in the tuple.  Let $G$ be a finite abelian $p$-group of type $\lambda$.  The number of subgroups $H \subseteq G$ of type $\nu$ equals the number of subgroups for which $G/H$ has type $\nu$ \cite[Equation (1.5), page 181]{M}.  The discussion at the bottom of page 1144 of \cite{petro}, implies that $F(d,\nu)$ is the number of subgroups of a finite abelian $p$-group of type $\nu^*$ that have type $\nu$.
\end{comment}

To conclude this section, we compare the results of Petrogradsky
described here to the work of du Sautoy and Lubotzky \cite{duSL}, which builds on earlier work of Igusa \cite{I}.
Theorem 5.9 of \cite{duSL}, specialized to $G=\GL_d$ and $\rho$ the
standard representation gives (\ref{eq:zetap_local}).  (The result of
\cite{duSL} is specialized to a single variable, but the multivariate
extension is obvious.)  Petrogradsky's proof uses a cotype-preserving bijective
correspondence between finite index subgroups $\Lambda$ of $\Z^d$ and subgroups
of the finite group $\Z^d/N\Z^d,$ where $N=\alpha_1(\Lambda).$  The
number of the latter can be expressed in terms of $q$-binomial
coefficients \cite{butler}.  On the other hand, du Sautoy and Lubotzky
interpret the $p$-part of the zeta function as a $p$-adic integral
over $\GL_d(\Z_p)$, which they compute using the Iwahori
decomposition.  This leads to a sum over the (affine) Weyl group
equivalent to (\ref{eq:zetap_local}). These ideas have been further developed to prove local functional equations for zeta functions of nilpotent groups and other Igusa-type zeta functions; see, e.g., \cite{KlopschVoll, Voll}.

%%%%%%%%%%%%%%%%%%%%%%%%%%%%%%%%%%%%%%%%
%%%%%%%%%%%%%%%%%%%%%%%%%%%%%%%%%%%%%%%%
\section{Density results for the corank}
\label{sec:proofs}
%%%%%%%%%%%%%%%%%%%%%%%%%%%%%%%%%%%%%%%%
%%%%%%%%%%%%%%%%%%%%%%%%%%%%%%%%%%%%%%%%

We begin by introducing the Dirichlet series counting
sublattices of $\Z^d$ of corank less than or equal to $m$.  This is
given by
\begin{equation}
  \label{def:corank_zeta}
  \zeta_{\Z^d}^{(m)}(s)=
\sum_{\substack{\Lambda\subset\Z^d \\ \crk(\Lambda)\leq m }}
\frac{1}{[\Z^d:\Lambda]^s}
\end{equation}
Recall that a sublattice of corank at most $m$
will have cotype $(\alpha_1, \alpha_2, \ldots, \alpha_d)$ with
$\alpha_{m+1}=\cdots=\alpha_d=1$. Therefore, in terms of
Petrogradsky's expression for the cotype zeta function given in Theorem \ref{petro_main_thm},  we have
\begin{align}
  \label{eq:zeta_rank_m}\nn
  \zeta_{\Z^d}^{(m)}(s)&=
\lim_{s_{m+1}\to\infty}\cdots\lim_{s_d\to\infty}
\zeta_{\Z^d}(s, \ldots ,s, s_{m+1}, \ldots, s_d) \\
&=\zeta(s-(d-1))\zeta(2s-2(d-2))
\cdots\zeta(ms-m(d-m))f_d^{(m)}(s),
\end{align}
where 
\[f_d^{(m)}(s)=\prod_p f_{d,p}^{(m)}(s)=
\prod_p 
\left(\sum_{\lambda\subset\{1,\ldots,m\}} 
\omega_{d,\lambda}(p^{-1}) \prod_{j\in\lambda}
p^{-js+j(d-j)}
\right).
\]
The analytic properties of $\zeta_{\Z^d}^{(m)}(s)$ will lead to our
desired density results.

\begin{prop}\label{prop:residue_corank_zeta}
  The corank at most $m$ zeta function $\zeta_{\Z^d}^{(m)}(s)$ has a simple
  pole at $s=d$ with residue
\begin{equation}
  \label{eq:2}
 \prod_p \left((1-p^{-1})\sum_{i=0}^m 
\qbinom{d}{i}\frac{p^{-i^2}}{\prod_{j=1}^{i}{(1-p^{-j})}}\right).
\end{equation}
\end{prop}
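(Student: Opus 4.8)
The plan is to locate the pole from the factorization (\ref{eq:zeta_rank_m}) and then convert the residue into a $q$-series identity. Writing $z_j=js-j(d-j)$, among the factors $\zeta(z_1),\ldots,\zeta(z_m)$ only $\zeta(z_1)=\zeta(s-(d-1))$ is singular at $s=d$, where it has a simple pole with residue $1$; for $2\le j\le m$ one has $z_j\big|_{s=d}=j^2\ge 4$, so $\zeta(z_j)$ is holomorphic and nonzero there. I would then check that $f_d^{(m)}(s)=\prod_p f_{d,p}^{(m)}(s)$ converges absolutely in a neighbourhood of $s=d$, hence is holomorphic there: Theorem~\ref{thm_perm}(1) gives $w_{d,\lambda}(p^{-1})=O_d(p^{-|\lambda|})$, and since $z_j\big|_{s=d}=j^2\ge 1$ for every $j\ge 1$, each nonempty $\lambda\subseteq\{1,\ldots,m\}$ contributes a term of size $O_d(p^{-2})$ to $f_{d,p}^{(m)}(d)$, so that $f_{d,p}^{(m)}(d)=1+O_d(p^{-2})$. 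Hence (\ref{eq:zeta_rank_m}) furnishes the meromorphic continuation of $\zeta_{\Z^d}^{(m)}(s)$ across $s=d$, where it has a simple pole with residue $\prod_{j=2}^{m}\zeta(j^2)\cdot f_d^{(m)}(d)$.

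Next I would make this residue explicit. Since $-js+j(d-j)=-j^2$ at $s=d$, evaluation gives $f_{d,p}^{(m)}(d)=\sum_{\lambda\subseteq\{1,\ldots,m\}}w_{d,\lambda}(p^{-1})\,p^{-\sum_{j\in\lambda}j^2}$, and combining this with $\prod_{j=2}^m\zeta(j^2)=\prod_p\prod_{j=2}^m(1-p^{-j^2})^{-1}$ collapses the residue into a single Euler product $\prod_p R_p$, where, with $q=p^{-1}$,
\[
R_p=\prod_{j=2}^{m}(1-q^{j^2})^{-1}\sum_{\lambda\subseteq\{1,\ldots,m\}}w_{d,\lambda}(q)\,q^{\sum_{j\in\lambda}j^2}.
\]
Substituting the definition $w_{d,\lambda}(q)=\sum_{\mu\subseteq\lambda}(-1)^{|\lambda|-|\mu|}\qbinom{d}{\mu}$, interchanging the two summations, and evaluating the inner alternating sum over $\lambda\supseteq\mu$ (which telescopes to $q^{\sum_{j\in\mu}j^2}\prod_{j\in\{1,\ldots,m\}\setminus\mu}(1-q^{j^2})$), together with the elementary identity $\prod_{j\in\{1,\ldots,m\}\setminus\mu}(1-q^{j^2})\big/\prod_{j=2}^m(1-q^{j^2})=(1-q)\big/\prod_{j\in\mu}(1-q^{j^2})$, yields
\[
R_p=(1-q)\sum_{\mu\subseteq\{1,\ldots,m\}}\qbinom{d}{\mu}\,\frac{q^{\sum_{j\in\mu}j^2}}{\prod_{j\in\mu}(1-q^{j^2})}.
\]

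It then remains to prove the combinatorial identity, valid for $0\le m\le d$,
\[
\sum_{\mu\subseteq\{1,\ldots,m\}}\qbinom{d}{\mu}\,\frac{q^{\sum_{j\in\mu}j^2}}{\prod_{j\in\mu}(1-q^{j^2})}=\sum_{i=0}^{m}\qbinom{d}{i}\,\frac{q^{i^2}}{\prod_{j=1}^{i}(1-q^j)},
\]
which I would establish by induction on $m$. Denoting the left-hand side by $G_m(d)$ and grouping its terms according to the largest element $i=\max\mu$, the product rule for $q$-multinomials $\qbinom{d}{\mu}=\qbinom{d}{i}\qbinom{i}{\mu\setminus\{i\}}$ (the second factor being well-defined since $\mu\setminus\{i\}\subseteq\{1,\ldots,i-1\}$) gives $G_m(d)=1+\sum_{i=1}^{m}\qbinom{d}{i}\frac{q^{i^2}}{1-q^{i^2}}\,G_{i-1}(i)$. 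By the inductive hypothesis $G_{i-1}(i)=\sum_{l=0}^{i-1}\qbinom{i}{l}q^{l^2}\big/\prod_{j=1}^{l}(1-q^j)$, and the finite Durfee-square identity $\sum_{l\ge 0}\qbinom{i}{l}q^{l^2}\big/\prod_{j=1}^{l}(1-q^j)=\prod_{j=1}^{i}(1-q^j)^{-1}$ (immediate from decomposing a partition with at most $i$ parts along its Durfee square) turns this into $G_{i-1}(i)=(1-q^{i^2})\big/\prod_{j=1}^{i}(1-q^j)$; substituting back completes the induction. Specializing $q=p^{-1}$ then identifies $\prod_p R_p$ with the residue (\ref{eq:2}).

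The step I expect to be the main obstacle is this last identity, and in particular recognizing that it is governed by the Durfee-square decomposition; granting that, the remaining manipulations---the inclusion--exclusion collapse from $w_{d,\lambda}$ to $q$-multinomials, the product rule for $q$-multinomials, and the simplification of the $(1-q^{j^2})$ ratios---are routine bookkeeping. A secondary, purely analytic point is justifying that (\ref{eq:zeta_rank_m}) holds as an identity of meromorphic functions near $s=d$ and that $f_d^{(m)}$ converges there, but this is standard once the estimate $w_{d,\lambda}(p^{-1})=O_d(p^{-|\lambda|})$ of Theorem~\ref{thm_perm}(1) is noted.
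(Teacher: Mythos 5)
Your proposal is correct, and at a structural level it is the same proof as the paper's: locate the pole at $s=d$ coming from $\zeta(s-(d-1))$, collect the holomorphic zeta factors $\prod_{j=2}^m \zeta(j^2)$, perform the inclusion--exclusion on $w_{d,\lambda}$ to pass to $q$-multinomials, group by the largest element $i=\max\mu$, and reduce to the finite Durfee-square identity. The paper's Lemma~\ref{lemma:qid2} is, after dividing both sides by $\qbinom{d}{i}\prod_{j=1}^{i-1}(1-q^{j^2})$ and using $\qbinom{d}{\mu\cup\{i\}}=\qbinom{d}{i}\qbinom{i}{\mu}$, exactly your claim that $G_{i-1}(i)=(1-q^{i^2})/\prod_{j=1}^i(1-q^j)$, and the paper's Lemma~\ref{lemma:qid} with $e=0$ is precisely your finite Durfee-square identity.

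The differences are in normalization and in the proof of the base identity, and both are genuine simplifications. By dividing out the $\prod_{j\notin\mu}(1-q^{j^2})$ factor up front and working with $G_m(d)=\sum_{\mu}\qbinom{d}{\mu}\,q^{\sum j^2}/\prod_{j\in\mu}(1-q^{j^2})$, you collapse the paper's two-lemma chain (Lemma~\ref{lemma:qid2} proved by induction on $i$, feeding into Lemma~\ref{lemma:qid} proved by induction on $n$ with the auxiliary shift parameter $e$) into a single self-contained induction on $m$, since in your bookkeeping the quantity $G_{i-1}(i)$ needed for the inductive step is literally the same object $G$ evaluated at smaller arguments. Your appeal to the bijective Durfee-square decomposition, rather than the paper's induction with the auxiliary parameter $e$, also removes the need for that parameter entirely; the paper actually remarks on this combinatorial interpretation after Lemma~\ref{lemma:qid} but does not use it. Finally, you explicitly justify absolute convergence of the Euler product near $s=d$ via the estimate $w_{d,\lambda}(p^{-1})=O_d(p^{-|\lambda|})$ from Theorem~\ref{thm_perm}(1), a point the paper handles by reference to the analytic continuation clauses of Theorem~\ref{petro_main_thm}; either is adequate, and making the $1+O_d(p^{-2})$ bound explicit is a nice touch.

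Everything checks: the telescoping of the inner sum over $\lambda\supseteq\mu$, the ratio $\prod_{j\notin\mu}(1-q^{j^2})/\prod_{j=2}^m(1-q^{j^2})=(1-q)/\prod_{j\in\mu}(1-q^{j^2})$, the multinomial factorization $\qbinom{d}{\mu'\cup\{i\}}=\qbinom{d}{i}\qbinom{i}{\mu'}$, the base case $G_0(d)=1$, and the induction (which is uniform in the second argument $d\ge m$, so that $G_{i-1}(i)$ with $i\le m$ is covered by the hypothesis). No gaps.
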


The simple pole comes from the simple pole of the Riemann zeta
function $\zeta(s-(d-1))$ at $s=d$.  The other zeta factors in
(\ref{eq:zeta_rank_m}) are holomorphic at $s=d$ and collectively
contribute a factor of $\prod_{2\leq j\leq m}\zeta(j^2)$ at $s=d$ to
the residue.  Thus
\begin{equation}
  \label{eq:9}
  \res_{s=d}\zeta_{\Z^d}^{(m)}(s) = \left(\prod_{2\leq j\leq m}\zeta(j^2)
\right) f_{d}^{(m)}(d).
\end{equation}
To complete the proof of Proposition \ref{prop:residue_corank_zeta}, it remains to evaluate
\begin{equation}
f_{d,p}^{(m)}(d)=\sum_{\lambda\subset\{1,\ldots,m\}}
\omega_{d,\lambda}(p^{-1}) \prod_{j\in\lambda} p^{-j^2}
\end{equation} 
and take the product over all primes $p$. Setting $q=p^{-1}$, we compute
  \begin{align}\nn
    \sum_{\lambda\subset\{1,\ldots,m\}} \omega_{d,\lambda}(q)
  \prod_{j\in\lambda} q^{j^2} &=
 \sum_{\lambda\subset\{1,\ldots,m\}}
\left(\sum_{\mu\subset\lambda} (-1)^{|\lambda|-|\mu|}\qbinom{d}{\mu}
\right)\prod_{j\in\lambda} q^{j^2}\\ \nn
& = \sum_{\mu\subset\{1,\ldots,m\}}\qbinom{d}{\mu}
\left(\sum_{\lambda\supset\mu} (-1)^{|\lambda|-|\mu|}
\prod_{j\in\lambda} q^{j^2}\right)\\ \label{eq:prop31id}
& = \sum_{\mu\subset\{1,\ldots,m\}}\qbinom{d}{\mu}
\prod_{j\in\mu} q^{j^2}
\prod_{j\not\in\mu} (1-q^{j^2}).
  \end{align}

In order to go further we need the intermediate result of Lemma
\ref{lemma:qid2} below.

\subsection{A $q$-multinomial identity}
The lemma below follows from the $q$-binomial theorem.
\begin{lemma}\label{lemma:qid}
Let $e, n$ be nonnegative integers.  We have
\begin{equation}
  \label{eq:lemma}
  \sum_{k=0}^n \qbinom{n}{k}q^{k^2+ek}  
\prod_{j= k+1+e}^{n+e}(1- q^{j})
= 1.
\end{equation}
\end{lemma}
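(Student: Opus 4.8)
The plan is to prove the identity (\ref{eq:lemma}) by induction on $n$, treating $e$ as a free parameter so that the inductive hypothesis is available for \emph{every} nonnegative integer $e$ at once. Write $S_n(e)$ for the left-hand side of (\ref{eq:lemma}). The base case $n=0$ is immediate: the sum consists of the single term $k=0$, with $\qbinom{0}{0}=1$ and empty product $\prod_{j=1+e}^{e}(1-q^j)=1$, so $S_0(e)=1$ for all $e$.

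For the inductive step I would substitute the $q$-Pascal recursion $\qbinom{n}{k}=\qbinom{n-1}{k}+q^{n-k}\qbinom{n-1}{k-1}$ into $S_n(e)$ and split the sum into two pieces. In the first piece the factor $\qbinom{n-1}{k}$ kills the term $k=n$, so $k$ runs only up to $n-1$; then one can pull the $j=n+e$ factor out of the product $\prod_{j=k+1+e}^{n+e}(1-q^j)=(1-q^{n+e})\prod_{j=k+1+e}^{(n-1)+e}(1-q^j)$, and what remains is exactly $(1-q^{n+e})\,S_{n-1}(e)$. In the second piece the factor $\qbinom{n-1}{k-1}$ kills the term $k=0$; reindexing $k\mapsto \ell+1$ and simplifying the exponent of $q$ (the contributions $n-k$ and $k^2+ek$ recombine, after the shift, into $\ell^2+(e+1)\ell+(n+e)$), while rewriting the surviving product as $\prod_{j=\ell+1+(e+1)}^{(n-1)+(e+1)}(1-q^j)$, turns this piece into $q^{n+e}\,S_{n-1}(e+1)$. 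By the inductive hypothesis, $S_{n-1}(e)=S_{n-1}(e+1)=1$, so $S_n(e)=(1-q^{n+e})+q^{n+e}=1$, completing the induction.

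The only genuine subtlety is that the second piece produced by $q$-Pascal lands on $S_{n-1}$ with the \emph{shifted} parameter $e+1$ rather than $e$; this is precisely why the induction must be carried out uniformly over all $e$, and is the step most likely to trip up a first attempt. Everything else is routine bookkeeping of summation limits and $q$-exponents. (As an alternative one can rewrite $\prod_{j=k+1+e}^{n+e}(1-q^j)=(q^{e+1};q)_n/(q^{e+1};q)_k$ and recognize (\ref{eq:lemma}) as a terminating instance of the $q$-binomial/Cauchy summation, but the inductive argument above is shorter and entirely self-contained.)
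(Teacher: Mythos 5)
Your proposal is correct and follows essentially the same route as the paper: both proofs induct on $n$ uniformly over all $e$, apply the same $q$-Pascal recursion, and split the sum so that the two pieces become $(1-q^{n+e})S_{n-1}(e)$ and $q^{n+e}S_{n-1}(e+1)$. The only difference is purely notational (the paper states the step as going from $n$ to $n+1$, and writes the recursion as $\qbinom{n+1}{k}=\qbinom{n}{k}+q^{n+1-k}\qbinom{n}{k-1}$); the substance, including the crucial observation that the second piece lands on the shifted parameter $e+1$, is identical.
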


This lemma will be used in the proof of Lemma \ref{lemma:qid2} below.
We note in passing that setting $e=0$ and letting $n\to\infty$ yields
the generating series for partitions in terms of the Durfee number
generating series. 

\begin{comment}
\begin{proofof}{Lemma \ref{lemma:qid}}
  The proof is by induction on $n$.  Let $S_{n,e}$ be the sum in
  (\ref{eq:lemma}).  Clearly $S_{0,e}=1$ for any $e$.  Suppose now
  that $S_{n,e}=1$ for all $e$.  Then 
\begin{align}\nn
S_{n+1,e} &= \sum_{k=0}^{n+1} \qbinom{n+1}{k}q^{k^2+ek}  
\prod_{j= k+1+e}^{n+1+e}(1- q^{j}) \\ \nn
&=\sum_{k=0}^{n+1}\left(\qbinom{n}{k}+ q^{n+1-k}\qbinom{n}{k-1}\right)
q^{k^2+ek}  \prod_{j= k+1+e}^{n+e+1}(1- q^{j}) \\ \label{eq:Snplus1}
&= S_{n,e}(1- q^{n+e+1}) + 
q^{n+1}\sum_{k=1}^{n+1}\qbinom{n}{k-1}q^{k^2+ek-k}  
\prod_{j= k+1+e}^{n+e+1}(1- q^{j}).
\end{align}
The last summand in the above expression is
\begin{align*}
  q^{n+1}\sum_{k=0}^{n}\qbinom{n}{k}q^{k^2+(e+1)k+e} 
\prod_{j= k+e+2}^{n+e+1}(1- q^{j}) &=
q^{n+e+1}S_{n,e+1}.
\end{align*}
Substituting this in (\ref{eq:Snplus1}) and using the inductive hypothesis,
\[S_{n+1,e} = (1- q^{n+e+1})+q^{n+e+1}=1,\]
as was to be shown.
\end{proofof}
\end{comment}

\begin{lemma}\label{lemma:qid2}
  Let $1\leq i\leq d$ be integers. We have
\begin{equation}
  \label{eq:1}
  \sum_{\mu\subset\{1,\cdots,i-1\}}
\qbinom{d}{\mu\cup\{i\}}\ \prod_{j\in \mu} q^{j^2}
\prod_{j\in\{1,\cdots,i-1\}\backslash \mu}(1- q^{j^2})
=
\qbinom{d}{i}\ \prod_{j=1}^{i}\frac{1-q^{j^2}}{1-q^j}.
\end{equation}
\end{lemma}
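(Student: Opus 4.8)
The plan is to separate out the dependence on $d$ by factoring the $q$-multinomial coefficient $\qbinom{d}{\mu\cup\{i\}}$. Write $\mu=\{\mu_1>\cdots>\mu_r\}\subseteq\{1,\dots,i-1\}$. Since $i$ is the largest element of $\mu\cup\{i\}$, the composition of $d$ attached to $\mu\cup\{i\}$ in the notation preceding Theorem~\ref{petro_main_thm} begins with $m_0=d-i$ and $m_1=i-\mu_1$, and its remaining parts form exactly the composition of $i$ attached to $\mu$ viewed inside $\N_{i-1}$. The telescoping product formula for $q$-multinomials then gives
\[
\qbinom{d}{\mu\cup\{i\}}=\qbinom{d}{d-i}\,\qbinom{i}{i-\mu_1}\qbinom{\mu_1}{\mu_1-\mu_2}\cdots\qbinom{\mu_r}{\mu_r}=\qbinom{d}{i}\,\qbinom{i}{\mu},
\]
where $\qbinom{i}{\mu}$ is the $q$-multinomial attached to $\mu\subseteq\N_{i-1}$ (with ``top'' $i$ in place of $d$). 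Pulling the common factor $\qbinom{d}{i}$ out of the left side of (\ref{eq:1}), it suffices to prove (\ref{eq:1}) with $d$ replaced by $i$; equivalently, writing
\[
P_i:=\sum_{\mu\subseteq\{1,\dots,i-1\}}\qbinom{i}{\mu}\prod_{j\in\mu}q^{j^2}\prod_{j\in\{1,\dots,i-1\}\setminus\mu}(1-q^{j^2}),
\]
it suffices to show $P_i=\prod_{j=1}^{i}\frac{1-q^{j^2}}{1-q^{j}}$ for every $i\ge1$, with the convention $P_0=1$.

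Next I would set up a recursion for $P_i$ by grouping the terms according to the largest element $\ell$ of $\mu$, where $\ell=0$ corresponds to $\mu=\emptyset$. For $\ell\ge1$ write $\mu=\nu\cup\{\ell\}$ with $\nu\subseteq\{1,\dots,\ell-1\}$; the same factorization as above, now applied with $(d,i)$ replaced by $(i,\ell)$, gives $\qbinom{i}{\mu}=\qbinom{i}{\ell}\qbinom{\ell}{\nu}$, while $\prod_{j\in\{1,\dots,i-1\}\setminus\mu}(1-q^{j^2})$ splits as $\prod_{j=\ell+1}^{i-1}(1-q^{j^2})$ times the product of $(1-q^{j^2})$ over $j\in\{1,\dots,\ell-1\}\setminus\nu$. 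The resulting inner sum over $\nu$ is precisely $P_\ell$, and one obtains
\[
P_i=\sum_{\ell=0}^{i-1}\qbinom{i}{\ell}\,q^{\ell^2}\,P_\ell\prod_{j=\ell+1}^{i-1}(1-q^{j^2}).
\]

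Now I would close the induction on $i$, the base case $i=1$ being $P_1=1$. Assuming $P_\ell=\prod_{j=1}^{\ell}\frac{1-q^{j^2}}{1-q^{j}}$ for all $\ell<i$ and substituting into the recursion, the product $\prod_{j=1}^{\ell}(1-q^{j^2})\cdot\prod_{j=\ell+1}^{i-1}(1-q^{j^2})=\prod_{j=1}^{i-1}(1-q^{j^2})$ factors out, reducing the desired identity to
\[
\sum_{\ell=0}^{i-1}\qbinom{i}{\ell}\frac{q^{\ell^2}}{\prod_{j=1}^{\ell}(1-q^{j})}=\frac{1-q^{i^2}}{\prod_{j=1}^{i}(1-q^{j})}.
\]
Adding the $\ell=i$ term $\frac{q^{i^2}}{\prod_{j=1}^{i}(1-q^j)}$ to both sides and then multiplying through by $\prod_{j=1}^{i}(1-q^j)$ converts this into $\sum_{\ell=0}^{i}\qbinom{i}{\ell}q^{\ell^2}\prod_{j=\ell+1}^{i}(1-q^j)=1$, which is exactly Lemma~\ref{lemma:qid} with $n=i$ and $e=0$.

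I expect the main obstacle to be the bookkeeping in the second step: correctly isolating the maximal element of $\mu$, matching up the factors of the $q$-multinomial and of the two complementary products, and checking that the residual inner sum is $P_\ell$ itself, so that the recursion carries no reference to $d$. Once the recursion is established the induction is immediate, and the reduction in the last step lands precisely on the $e=0$ case of Lemma~\ref{lemma:qid}, which is presumably the reason that lemma (in its full $e$-parametrized form, even though only $e=0$ is needed here) is proved first.
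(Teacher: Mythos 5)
Your proof is correct and takes essentially the same route as the paper: both arguments induct on $i$, decompose the sum according to the largest element of $\mu$, and land on Lemma~\ref{lemma:qid} with $e=0$. The only difference is organizational — you pull out the factor $\qbinom{d}{i}$ at the start to make the recursion $d$-free, whereas the paper carries $d$ through and collapses $\qbinom{d-i_0}{i-i_0}\qbinom{d}{i_0}$ to $\qbinom{d}{i}\qbinom{i}{i_0}$ at the end; both uses of the telescoping multinomial identity amount to the same thing.
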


The argument below is similar to that given in Section 4.1 of
Stasinski-Voll \cite{StasinskiVoll-statistic}.  We give a proof for completeness.

\begin{proofof}{Lemma \ref{lemma:qid2}}
  We argue by induction on $i$.  The base case $i=1$ is immediate.
  Assume the identity is true for all $i_0$ satisfying $1\leq i_0<i$.  We remove the contribution of $\mu=\emptyset$ from the left-hand side of
(\ref{eq:1})  and write it as
\begin{align} \nn
   \sum_{\mu\subset\{1,\cdots,i-1\}}&
\qbinom{d}{\mu\cup\{i\}} 
\ \prod_{j\in \mu} q^{j^2}
\prod_{j\in \{1,\cdots,i-1\}\backslash\mu}(1- q^{j^2}) -
\qbinom{d}{i}\prod_{j=1}^{i-1}(1- q^{j^2})  \\ \nn
&= \sum_{i_0=1}^{i-1}\  \ \sum_{\mu\subset\{1,\cdots,i_0-1\}}
\qbinom{d}{\mu\cup\{i_0,i\}} \ 
\prod_{j\in \mu\cup\{i_0\}}q^{j^2}
\prod_{j\in\{1,\cdots,i-1\}\backslash \mu\cup\{i_0\}}(1- q^{j^2})\\ \nn
&= \sum_{i_0=1}^{i-1} \qbinom{d-i_0}{i-i_0} q^{i_0^2}  
\prod_{j\in i_0+1}^{i-1}(1- q^{j^2})\\ \label{eq:innersum}
 & \qquad\qquad \times\sum_{\mu\subset\{1,\cdots,i_0-1\}}
\qbinom{d}{\mu\cup\{i_0\}} \ 
\prod_{j\in \mu}q^{j^2}
\prod_{j\in\{1,\cdots,i_0-1\}\backslash \mu}(1- q^{j^2}),
\end{align}
where we have used the identity
\begin{equation*}
  \qbinom{d}{\mu\cup\{i_0,i\}}=\qbinom{d}{\mu\cup\{i_0\}}
\qbinom{d-i_0}{i-i_0}
\end{equation*} in the final step.
We continue by using the inductive hypothesis on the inner sum, i.e., the expression in
(\ref{eq:innersum}), and see that the left-hand side of (\ref{eq:1}) is equal to: 
\begin{align*}
\qbinom{d}{i}\prod_{j=1}^{i-1}(1- q^{j^2}) +
\sum_{i_0=1}^{i-1} & \qbinom{d-i_0}{i-i_0} q^{i_0^2}  
\prod_{j= i_0+1}^{i-1}(1- q^{j^2})
\qbinom{d}{i_0}\ \prod_{j=1}^{i_0}\frac{1-q^{j^2}}{1-q^j}\\
&= \prod_{j= 1}^{i-1}(1- q^{j^2})
\sum_{i_0=0}^{i-1} \qbinom{d}{i}\qbinom{i}{i_0}q^{i_0^2}  
\prod_{j= 1}^{i_0}(1- q^{j})^{-1}
\end{align*}
where we have used the subset-of-a-subset identity.
Comparing with the right-hand side of (\ref{eq:1}), we are reduced to proving
\begin{equation*}
  \label{eq:3}
\sum_{i_0=0}^{i-1} \qbinom{i}{i_0}q^{i_0^2}  
\prod_{j= 1}^{i_0}(1- q^{j})^{-1}
= \frac{1-q^{i^2}}{\prod_{j=1}^{i}(1-q^j)}
\end{equation*}
or equivalently,
\begin{equation*}
  \label{eq:4}
\sum_{i_0=0}^{i-1} \qbinom{i}{i_0}q^{i_0^2}  
\prod_{j= i_0+1}^{i}(1- q^{j})
= 1-q^{i^2}.
\end{equation*}
We can write this a little more nicely:
\begin{equation*}
  \label{eq:5}
\sum_{i_0=0}^i \qbinom{i}{i_0}q^{i_0^2}  
\prod_{j= i_0+1}^{i}(1- q^{j})
= 1.
\end{equation*}
This is the case $e=0$ of Lemma \ref{lemma:qid}.
\end{proofof}

\subsection{Conclusion of the proof of Proposition
  \ref{prop:residue_corank_zeta}}

We return to the evaluation of $f_{d,p}^{(m)}(d)$ using the expression of
(\ref{eq:prop31id}):
\begin{align*}
 f_{d,p}^{(m)}(d)&= \sum_{\mu\subset\{1,\ldots,m\}}\qbinom{d}{\mu}
\prod_{j\in\mu} q^{j^2}
\prod_{j\not\in\mu} (1-q^{j^2}).
\end{align*}
By Lemma \ref{lemma:qid2}, the above sum restricted to subsets with
largest element $i$ yields \small
\begin{align*}
 \left[ q^{i^2}\prod_{j=i+1}^m(1-q^{j^2})\right]
\sum_{\mu\subset\{1,\ldots,i-1\}}
\qbinom{d}{\mu\cup\{i\}}\ \prod_{j\in \mu} q^{j^2}
\prod_{j\in\{1,\cdots,i-1\}\backslash \mu}(1- q^{j^2})=
\qbinom{d}{i}\frac{q^{i^2}\prod_{j=1}^m(1-q^{j^2})}{\prod_{j=1}^{i}{1-q^j}}.
\end{align*}\normalsize
Noting that $i=0$ corresponds to the contribution of $\mu=\emptyset$,
we sum over all $i$ to obtain
\begin{equation*}
  \label{eq:8}
  f_{d,p}^{(m)}(d)= \left[\prod_{j=1}^m(1-q^{j^2})\right]
\sum_{i=0}^m 
\qbinom{d}{i}\frac{q^{i^2}}{\prod_{j=1}^{i}{(1-q^j)}}.
\end{equation*}
Now taking the product over $p$ cancels the zeta factors in
(\ref{eq:9}) and we are left with
\begin{equation*}
  \label{eq:7}
  \res_{s=d}\zeta_{\Z^d}^{(m)}(s)= \prod_p \left((1-p^{-1})\sum_{i=0}^m 
\pbinom{d}{i}\frac{p^{-i^2}}{\prod_{j=1}^{i}{(1-p^{-j})}}\right).
\end{equation*}
This concludes the proof of Proposition \ref{prop:residue_corank_zeta}.

\subsection{The density of sublattices of corank at most $m$}

Theorem \ref{thm:ndmx}, the asymptotic expression for the number of
sublattices with corank at most $m$, follows immediately from
Proposition \ref{prop:residue_corank_zeta} and the analytic
continuation statements from Theorem \ref{petro_main_thm}.

We note that the constant term in the expression
(\ref{eq:Zd_asymptotic}) is
\[
\frac{\zeta(d)\zeta(d-1)\cdots \zeta(2)}{d} = 
\frac{1}{d} \prod_p \prod_{j=0}^{d-2} \left(1-p^{-d+j}\right)^{-1}.
\]
Taking the quotient of this term with the constant term in Theorem \ref{thm:ndmx} completes the proof of Corollary \ref{cor:corank_m_density}.

\section{Sylow $p$-subgroups of cokernels of matrices in Hermite normal form}
\label{sec:universality}
%%%%%%%%%%%%%%%%%%%%%%%%%%%%%%%%%%%%%%%%
%%%%%%%%%%%%%%%%%%%%%%%%%%%%%%%%%%%%%%%%

The goal of this section is to prove the second of the equivalent statements in Theorem \ref{dist_ppart}.  Our strategy for determining the distribution of Sylow $p$-subgroups of cokernels of matrices in Hermite normal form is to relate this distribution to the distribution of cokernels of Haar random $p$-adic matrices.  Haar measure on the $p$-adic integers $\Z_p$ gives rise to Haar measure on $M_d(\Z_p)$, normalized so that the total volume is $1$.  More concretely, each matrix entry can be chosen independently with respect to Haar measure on $\Z_p$.  Throughout the rest of this section, we use $\Prob_{M \in M_d(\Z_p)}(\cdot)$ to denote the probability that a Haar random matrix $M \in M_d(\Z_p)$ has some property.  This is equal to the volume of the subset of $M_d(\Z_p)$ consisting of matrices with this property.  We give an example from the introduction.  Recall the distribution $P^p_d$ on finite abelian $p$-groups of rank at most $d$ defined in (\ref{rankd_prob}).
\begin{prop}\cite[Proposition 1]{fw}\label{FW_prop}
Let $G$ be a finite abelian $p$-group of rank at most $d$.  Let $M \in M_d(\Z_p)$ be a random matrix.  Then
\[
\Prob_{M \in M_d(\Z_p)}(\coker(M)\cong G) = P^p_d(G).
\]
\end{prop}

We use the following fact, which follows from Proposition \ref{FW_prop} and \cite[Corollary 3.8]{cohen_lenstra}. 
\begin{prop}\label{CL_cor}
Let $e \ge 0$ be an integer and $M \in M_d(\Z_p)$ be a random matrix.  Then
\[
\Prob_{M \in M_d(\Z_p)}(\left|\coker(M)\right| = p^e) = \frac{\prod_{j=1}^d (1-p^{-j})}{p^e} \pbinom{d+e-1}{e}.
\]
\end{prop}
\noindent Any $\alpha \in \Z_p$ can be written uniquely as $\alpha = p^e u$ where $e\in \Z_{\ge 0}$ and $u\in \Z_p$ is a unit.  In this case, we write $v_p(\alpha) = e$. Note that $\left|\coker(M)\right| = p^e$ if and only if $v_p(\det(M)) = e$.

We will use the following analogue of Proposition \ref{hnf_prop} for matrices with entries in $\Z_p$.
\begin{prop}\cite[Theorem 3.1.7]{caruso}\label{Zp_HNF}
Any $M \in M_d(\Z_p)$ can be written uniquely as a product $M = U H$ where $U \in \GL_d(\Z_p)$ and $H$ is an upper triangular matrix with entries $a_{ij}$ such that $a_{jj} = p^{n_j}$ where $n_j \ge 0$ and each $a_{ij}$ with $i<j$ is an integer satisfying $0 \le a_{ij} < p^{n_j}$.
\end{prop}
The matrix $H$ in Proposition \ref{Zp_HNF} is the $p$-adic version of an integer matrix in Hermite normal form.  We call $H$ the \emph{Hermite normal form of $M$} and write $\HNF(M) = H$.  We say that $H \in M_d(\Z_p)$ of this type is in {\em Hermite normal form}.  

The left multiplication by the matrix $U$ corresponds to operations on the rows of $M$ that are operations on the standard basis vectors $e_i$.  The invertibility of $U$ over $\Z_p$ ensures that the rows of $M$ and the rows of $H$ generate the same sublattice of $\Z_p^n$.  Therefore, if $M = UH$ are as in Proposition \ref{Zp_HNF}, then $\coker(M) \cong \coker(H)$.

\begin{prop}\label{count_Zp_HNF}
Suppose $H \in M_d(\Z_p)$ is in Hermite normal form and $\det(H) = p^e$. Then,
\begin{eqnarray*}\label{eq_HNF}
\Prob_{M \in M_d(\Z_p)}(\HNF(M) = H) & = & \frac{\Prob_{M\in M_d(\Z_p)}\left(v_p(\det(M))  =   v_p(\det(H))\right)}{\#\left\{\Lambda \subseteq \Z^d \colon [\Z^d \colon \Lambda] = \det(H)\right\}} \\
& = &  \frac{\prod_{j=1}^d(1-p^{-j})}{p^{ed}}.
\end{eqnarray*}
\end{prop}

\begin{proof}
Since $M = U \HNF(M)$ for some $U \in \GL_d(\Z_p)$, and $\det(U)$ is a unit in $\Z_p$, we see that $v_p(\det(M)) = v_p(\det(\HNF(M)))$.  The set of $M \in M_d(\Z_p)$ with $v_p(\det(M)) = p^e$ is a disjoint union of orbits $\GL_d(\Z_p) H_1,\ldots, \GL_d(\Z_p)  H_k$, where $H_1,\ldots, H_k$ are the finitely many distinct matrices in Hermite normal form of determinant $p^e$.  These matrices are in bijection with the sublattices of $\Z_p^d$ of index~$p^e$, which are in bijection with the sublattices of $\Z^d$ of index~$p^e$.

The volume of the orbit $\GL_d(\Z_p) H_i$ is equal to the probability that $d$ randomly chosen vectors of $\Z_p^d$ each lie in the lattice spanned by the rows of $H_i$ and are linearly independent.  This probability depends on $\det(H_i)$, but not on the particular choice of $H_i$.  This completes the proof of the first equality in Proposition \ref{count_Zp_HNF}.

Proposition \ref{CL_cor} gives $\Prob_{M \in M_d(\Z_p)}\left(v_p(\det(M))= v_p(\det(H))\right)$.  We know that 
\[
\#\left\{\Lambda \subseteq \Z^d \colon [\Z^d \colon \Lambda] = \det(H)\right\}
\] 
is the $p^{-es}$ coefficient of the power series expansion of $\zeta_{\Z^d}(s) = \zeta(s)\zeta(s-1)\cdots \zeta(s-(d-1))$, which is $\pbinomreg{d+e-1}{e}$.  

For the final equality in Proposition \ref{count_Zp_HNF}, recall that
\[
\pbinomreg{d+e-1}{e} = \pbinom{d+e-1}{e} p^{e(d-1)},
\] 
and therefore,
\[
\frac{\frac{\prod_{j=1}^d (1-p^{-j})}{p^e} \pbinom{d+e-1}{e}}{\pbinomreg{d+e-1}{e}} = \frac{\prod_{j=1}^d (1-p^{-j})}{p^{ed}}.
\]
\end{proof}

By Proposition \ref{hnf_prop}, a sublattice $\Lambda \subseteq \Z^d$ gives rise to a $d\times d$ integer matrix in Hermite normal form $H(\Lambda)$ with $[\Z^d\colon \Lambda] = \det(H(\Lambda))$.  An application of the Chinese remainder theorem shows that for each prime $p,\ H(\Lambda)$ determines a matrix in Hermite normal form with determinant equal to a power of $p$. 
\begin{definition}\label{HNFp_def}
Suppose $H \in \mathcal{H}_d(\Z)$ has entries $a_{ij}$ and $\det(H) = p_1^{e_1}\cdots p_r^{e_r}$ where $p_1, \ldots, p_r$ are distinct primes and each $e_i$ is a positive integer.  For each prime $p_i$, define $H_{p_i} \in \mathcal{H}_d(\Z)$ with $\det(H_{p_i}) = p_i^{e_i}$ as follows.  Write each $a_{jj} = p_i^{b_j} u_j$ where $p_i \nmid u_j$.  Let $H_{p_i}$ be the upper-triangular matrix with entries $b_{jk}$ defined so that:
\begin{itemize}
\item $b_{jj} = p_i^{b_j}$, and
\item for $1\le j < k \le d,\ b_{jk}$ is the unique integer $0 \le b_{jk} < p_i^{b_k}$ with $b_{jk} \equiv \frac{a_{jk}}{u_j} \pmod{p_i^{b_k}}$.
\end{itemize}
If $p$ is a prime for which $p\nmid \det(H)$, define $H_p = I_d$.
\end{definition}
\noindent With this definition, it is clear that if $H \in \mathcal{H}_d(\Z)$, then each $H_p \in \mathcal{H}_d(\Z)$ as well.  The following proposition follows from an application of the Chinese remainder theorem.
\begin{prop}
Definition \ref{HNFp_def} gives a bijection between matrices $H \in \mathcal{H}_d(\Z)$ and collections $\left\{H_p\right\}$ consisting of one $H_p \in \mathcal{H}_d(\Z)$ for each prime $p$ and where all but finitely many $H_p = I_d$.  Moreover, $\det(H) = \prod_p \det(H_p)$.
\end{prop}

Note that for any $H \in \HH_d(\Z),\ \coker(H)_p = \coker(H_p)$.  That is, the Sylow $p$-subgroup of the cokernel of $H$ depends only on $H_p$.  Suppose $Q \in \HH_d(\Z)$ has $\det(Q) = p^e$.  Theorem \ref{dist_ppart} follows from showing that the probability that a random $H \in \mathcal{H}_d(\Z)$ has $H_p = Q$ is equal to the probability that a random $M \in M_d(\Z_p)$ has $\HNF(M) = Q$.  
\begin{prop}\label{count_HNF_p}
Suppose $Q \in \mathcal{H}_d(\Z)$ satisfies $\det(Q) = p^e$ for some prime $p$ and positive integer $e$.  Then
\[
\lim_{X \to \infty} \frac{\#\left\{H \in \HH_d(X) \colon H_p = Q\right\}}{\#\HH_d(X)} = \frac{\prod_{j=1}^d (1-p^{-j})}{p^{ed}}.
\]
\end{prop}

\begin{proof}
There is a bijection between $\left\{H \in \mathcal{H}_d(X) \colon H_p = Q\right\}$ and $\left\{H \in \mathcal{H}_d\big(\frac{X}{p^e}\big) \colon H_p = I_d\right\}$.  Note that 
\begin{eqnarray*}
\lim_{X \to \infty} \frac{\#\left\{H \in \mathcal{H}_d\big(\frac{X}{p^e}\big) \colon H_p = I_d\right\}}{\#\HH_d(X)}  & = & 
\lim_{X \to \infty} \frac{\#\left\{H \in \mathcal{H}_d\big(\frac{X}{p^e}\big) \colon H_p = I_d\right\}}{\#\HH_d\left(\frac{X}{p^e}\right)}  \frac{\#\HH_d\left(\frac{X}{p^e}\right)} {\#\HH_d(X)}.
\end{eqnarray*}
By \eqref{eq:Zd_asymptotic}, we have that 
\[
\lim_{X \to \infty} \frac{\#\HH_d\big(\frac{X}{p^e}\big)} {\#\HH_d(X)} = \frac{1}{(p^e)^d}.
\]
Let $g_d(k)$ be the number of sublattices $\Lambda \subseteq \Z^d$ of index $k$ for which $H(\Lambda)_p = I_d$.  We have 
\begin{equation}\label{ct1}
\sum_{k =1}^\infty g_d(k) k^{-s} = \left((1-p^{-s})(1-p^{-s+1})\cdots (1-p^{-s+d-1}) \right)\zeta_{\Z^d}(s).
\end{equation}
This expression is the same $\zeta_{\Z^d}(s)$ except that the local factor at $p$ in its Euler product is replaced with $1$.  It is still clear that the right-most pole of this function is a simple pole at $s=d$, so applying a Tauberian theorem as we did to derive expression \eqref{eq:Zd_asymptotic} shows that 
\begin{equation}\label{ct2}
\lim_{X \to \infty} \frac{\#\left\{H \in \mathcal{H}_d(\frac{X}{p^e}) \colon H_p = I_d\right\}}{\#\HH_d\left(\frac{X}{p^e}\right)} = (1-p^{-d})(1-p^{-d+1})\cdots (1-p^{-1}).
\end{equation}
Combining equations \eqref{ct1} and \eqref{ct2} completes the proof.

\end{proof}
Combining Propositions \ref{FW_prop}, \ref{count_Zp_HNF}, and \ref{count_HNF_p} completes the proof of Theorem \ref{dist_ppart}.

%%%%%%%%%%%%%%%%%%%%%%%%%%%%%%%%%%%%%%%%
%%%%%%%%%%%%%%%%%%%%%%%%%%%%%%%%%%%%%%%%
\section{Conclusion}
\label{sec:conclusion}
%%%%%%%%%%%%%%%%%%%%%%%%%%%%%%%%%%%%%%%%
%%%%%%%%%%%%%%%%%%%%%%%%%%%%%%%%%%%%%%%%

The results and methods of this paper suggest several natural
directions for further study.

\subsection{Subgroup and subring growth zeta functions}
We may also try to construct multivariate
Dirichlet series to study subgroup growth for other groups.  A first
case of potential interest is the discrete Heisenberg group
\[H_3=\langle a,b,c|\ c=[a,b], [a,c]=[b,c]=1\rangle.
\]
The {\em normal subgroup zeta function} of $H_3$ is
\begin{equation}
  \label{eq:h3}
  \zeta_{H_3}(s)=\sum_{N\trianglelefteq_f H_3} [H_3:N]^{-s}
\end{equation}
where the sum is over all finite index normal subgroups of  $H_3$.
It has been shown \cite{gss}  that
\begin{equation}
  \label{eq:h3b}
  \zeta_{H_3}(s)= \zeta(s)\zeta(s-1)\zeta(3s-2).
\end{equation}
A multivariate generalization of this series might give more refined
information on the distribution of the finite groups which arise as
quotients of $H_3$.

Similar questions can be asked for subring growth.  For example, we
expect that the cotype subring zeta function of $\Z^3$ can be used
to show that in contrast to the  case studied here, most of the
subrings of $\Z^3$ (ordered by index) are not cocyclic. In a nonabelian setting, the Lie ring
$\mathfrak{sl}_2(\Z)$ has an explicitly computed zeta function
\begin{equation}
  \label{eq:sl2Z}
  \zeta_{\mathfrak{sl}_2(\Z)}(s)=
\sum_{L}[\mathfrak{sl}_2(\Z):L]^{-s}=
P(2^{-s})\frac{\zeta(s)\zeta(s-1)\zeta(2s-2)\zeta(2s-1)}{\zeta(3s-1)},
\end{equation}
where the sum is over all finite index Lie subrings of
$\mathfrak{sl}_2(\Z)$ and $P(x)=(1+6x^2-8x^3)/(1-x^3)$ \cite{duST}.
It would be interesting to compute the cotype subring zeta function of
$\mathfrak{sl}_2(\Z)$ and use it to find the density
of Lie subrings with cyclic quotient.

Klopsch and Voll compute the subring zeta functions of all 3-dimensional Lie
algebras over $\Z_p$ in a
uniform manner \cite{KlopschVoll3d}.  Their techniques, in particular, should allow
one to compute the cotype zeta function for both $H_3$ and
$\mathfrak{sl}_2(\Z)$. 

\subsection{Zeta functions of classical groups}
\label{subsec:classical}

The subgroup growth zeta function $\zeta_{\Z^d}(s)$ of $\Z^d$
also arises in the more general context of the zeta functions associated
to algebraic groups studied by Hey, Weil, Tamagawa, Satake, Macdonald
and Igusa \cite{He, W, T, S, M, I}.  For $G$ a linear algebraic
group over $\Q_p$ and a rational representation $\rho:G\to \GL_n
$ they define
\begin{equation}
  \label{eq:zfcg}
  Z_{G,\rho}(s)=\int_{G^+}|\det\rho(g)|^s\,dg
\end{equation}
where $G^+=\rho^{-1}(\rho(G(k)\cap M_n(\oo_p)),$ where
$\oo_p$ is the ring of integers of $\Q_p.$ When $G=\GL_n$ and $\rho$
is the natural representation, $Z_{G,\rho}(s)$ is just the
$p$-part of the subgroup growth zeta function $\zeta_{\Z^d}(s).$ In
more recent work, du Sautoy and Lubotzky \cite{duSL} show that
$Z_{G,\rho}(s)$ for more general $G$ and $\rho$ continues to have an
interpretation as a generating series counting substructures of
algebras.

%In fact, as noted in Section \ref{sec:petro}, the proof of
%their Theorem 5.9 works also in our multivariate setting and is
%potentially applicable to the more general problems we describe below.

We take an explicit example from Bhowmik-Grunewald \cite{bg}, see
also \cite[Theorem 12]{bhowmik}.  
Let $\beta$ be the alternating bilinear form on a $2n$
dimensional space associated to the matrix
\[
\begin{pmatrix}
  0& -I_n\\I_n&0
\end{pmatrix}.
\]
A sublattice $\Lambda$ of $\Z^{2n}$ is $\beta$-polarized if
$\widehat\Lambda=c\Lambda$ for some constant $c\in \Q^\times$, where 
\[
\widehat\Lambda=\{v\in \Z^{2n}: \beta(u,v)\in \Z \mbox{ for all } 
u\in\Z^{2n}\}.
\]
Define the group $\GSp_{2n}(\Q)$ of symplectic similitudes by 
\begin{equation*}
  \label{eq:gsp}
  \GSp_{2n}(\Q)=\{g\in \GL_{2n}(\Q): \beta(gx,gy)=\mu_g\beta(x,y) 
\mbox{ for some $\mu_g\in\Q^\times$ and all $x,y\in \Q^n$}\}.
\end{equation*}
Following computations of Satake \cite{S} and Macdonald \cite{M}, the
zeta function of the group $ \GSp_{6}(\Q)$ is written down explicitly
in \cite{duSL}.  Bhowmik and Grunewald use this to show that the
number of $\beta$-polarized sublattices of $\Z^6$ of index less than
$X$ is asymptotic to $cX^{7/3}$ for an explicit constant $c$.  The
results of \cite{duSL} indicate a way to extend these computations, both to higher
rank and to include the distribution of cotype.

\begin{comment}

Cohen-Lenstra type heuristics for finite abelian groups with an
alternating bilinear pairing are studied in the paper of Delaunay \cite{del}.
Such groups arise as the (conjecturally finite) Tate-Shafarevich
groups of elliptic curves over $\Q$.  Bhargava, Kane, Lenstra, Poonen
and Rains develop this point of view further.  In \cite{bklpr} they
use cokernels of random alternating $d\times d$ matrices in
$M_d(\Z_p)$ to model the Sylow $p$-subgroup of the Tate-Shafarevich group.  It
would be interesting to see whether analogues of their results can be
obtained with the methods used in this paper.

\end{comment}

\bibliographystyle{hplain}
\bibliography{sources}

\end{document}